\documentclass{IEEEtran}
\usepackage{amssymb,amsthm}
\usepackage{bbm} 
\usepackage[utf8]{inputenc}
\usepackage{tikz}
\let\oequation\equation
\let\oeqend\endequation
\makeatletter
\def\@thiseqno{NOTHING}
\def\@myendequation{\eqno \hbox{\normalfont \normalcolor \@thiseqno}$$\@ignoretrue}
\def\@eeequation[#1]{\let\@ooeqend\@myendequation
$$\edef\@thiseqno{#1}\protected@edef\@currentlabel{\csname p@equation\endcsname#1}}%
\renewenvironment{equation}{%
\let\@ooeqend\oeqend
\@ifnextchar[\@eeequation\oequation}
{\@ooeqend}
\makeatother
\let\widebar\overline

\newtheorem{theorem}{Theorem}

\newtheorem{proposition}[theorem]{Proposition}

\newtheorem*{assumption}{Assumption}
\newtheorem{definition}[theorem]{Definition}
\newtheorem{oracle}[theorem]{Oracle}
\newtheorem{algorithm}{Algorithm}
\def\Q{\mathcal Q}

\def\R{\mathbb R}
\def\E{\mathbb E}
\def\e{\mathbbm e}
\def\O{\R_{\ge}}
\def\B{\mathcal B}
\def\S{\mathcal S}
\def\PSO{\mathord{\mbox{\sf ptO}}}
\def\HSO{\mathord{\mbox{\sf hpO}}}
\def\wPSO{\mathord{\mbox{\sf ptO}}^*}
\def\wHSO{\mathord{\mbox{\sf hpO}}^*}
\def\conv{\mathord{\mbox{\sf conv}}}
\def\eps{\varepsilon}
\newcommand\dispqed{\eqno \hbox{\normalfont\normalcolor\qedsymbol}}

\title{\bf Inner approximation algorithm for
solving linear multiobjective optimization problems}
\author{Laszlo Csirmaz%
\thanks{Central European University, Budapest}%
\thanks{e-mail:\tt~csirmaz@renyi.hu }}
\IEEEspecialpapernotice{
Dedicated to the memory of Fantisek Mat\'u\v s}

\begin{document}
\maketitle

\begin{abstract}

Benson's outer approximation algorithm and its variants are the most
frequently used methods for solving linear multiobjective optimization
problems. These algorithms have two intertwined parts:
single-objective linear optimization on one hand, and a combinatorial part
closely related to vertex enumeration on the other. Their separation
provides a deeper insight into Benson's algorithm, and points toward a dual
approach. Two skeletal algorithms are defined which focus on the
combinatorial part. Using different single-objective optimization problems
-- called oracle calls -- yield different algorithms, such as a sequential
convex hull algorithm, another version of Benson's algorithm with the 
theoretically best possible iteration count, the dual algorithm of 
Ehrgott, L\"ohne and Shao \cite{l-dual}, and 
the new algorithm. The new algorithm has several
advantages. First, the corresponding single-objective optimization problem
uses the original constraints without adding any extra variables or
constraints. Second, its iteration count meets the theoretically best
possible one. As a dual algorithm, it is sequential: in each iteration it
produces an extremal solution, thus can be aborted when a satisfactory
solution is found. The Pareto front can be ``probed'' or ``scanned'' from
several directions at any moment without adversely affecting the efficiency.
Finally, it is well suited to handle highly degenerate problems where there
are many linear dependencies among the constraints. On problems with ten or
more objectives the implementation shows a significant increase in
efficiency compared to {\em Bensolve} -- due to the reduced number of
iterations and the improved combinatorial handling.
\end{abstract}

\begin{IEEEkeywords}
Multiobjective optimization; linear programming;
duality; vertex enumeration; double description; objective space

\textit{AMS Classification Numbers}---90C29; 90C05
\end{IEEEkeywords}

\section{Introduction}\label{sec:intro}

Our notation is standard and mainly follows that of \cite{l-dual,loehne2}.
The transpose of the matrix $M$ is denoted by $M^T$. Vectors
are usually denoted by small letters and are considered as single column
matrices. For two vectors $x$ and $y$ of the same dimension, $xy$ denotes
their inner product, which is the same as the matrix product $x^Ty$.  The
$i$-th coordinate of $x$ is denoted by $x_i$, and $x\le y$ means that for
every coordinate $i$, $x_i\le y_i$. The non-negative orthant $\O^n$ is
the collection of vectors $x\in\R^n$ with $x\ge 0$, that is, vectors whose
coordinates are non-negative numbers.

For an introduction to higher dimensional polytopes see \cite{ziegler}, and
for a description of the double description method and its variants, consult
\cite{avis-fukuda}. Linear multiobjective optimization problems, methods,
and algorithms are discussed in \cite{l-dual} and the references therein.

\subsection{The MOLP problem}\label{subsec:MOLP}

Given positive integers $n$, $m$, and $p$, the $m\times n$ matrix $A$ maps
the {\em problem space $\R^n$} to $\R^m$, and the $p\times n$ matrix $P$
maps the problem space $\R^n$ to the {\em objective space $\R^p$}.  For
better clarity we use $x$ to denote points of the problem space $\R^n$,
while $y$ denotes points in the objective space $\R^p$.
In the problem space a convex closed polyhedral set $\mathcal A$ is
specified by a collection of linear constraints. For simplicity we assume
that the 
constraints are given in the following special format. This format will 
only be used in Section \ref{sec:oracle-calls}.
\begin{equation}\label{eq:A}
   \mathcal A = \{ x\in\R^n:\, Ax=c, \, x \ge 0 \,\},
\end{equation}
where $c\in\R^m$ is a fixed vector.
The $p$-dimensional {\em linear projection} of $\mathcal A$ is given by the
$p\times n$ matrix $P$ is
\begin{equation}\label{eq:Q}
    \Q = P\mathcal A = \{ Px \,:\, x\in\mathcal A\,\}.
\end{equation}
Using this notation, the {\em multiobjective linear optimization problem} can
be cast as follows:
\begin{equation}[MOLP]\label{eq:1}
\mbox{find }~~  \min\nolimits_y \, \{ y:\,y\in\Q\,\},
\end{equation}
where minimization is understood with respect to the coordinate-wise ordering
of $\R^p$. The point $\hat y\in\mathcal Q$ is {\em non-dominated} or {\em 
Pareto optimal}, if no $y
\le \hat y$ different from $\hat y$ is in $\mathcal Q$; and it is {\em weakly
non-dominated} if no $y < \hat y$ is in $\mathcal Q$. Solving the
multiobjective optimization problem is to find (a description of) all 
non-dominated vectors $\hat y$ together with the corresponding {\rm
pre-images} $\hat x\in\R^n$ such that $\hat y = P\hat x$. 

Let $\Q^+ = \Q + \O^p$, the Minkowski sum of $\Q$ and the non-negative
orthant of $\R^p$, see \cite{ziegler}. It follows easily from the
definitions, but see also \cite{loehne3,l-dual,loehne4},
that non-dominated points of $\Q$ and of $\Q^+$ are the same. The weakly
non-dominated points of $\Q^+$ form its {\em Pareto front}. Figure
\ref{fig:pareto} illustrates non-dominated (solid line), and
weakly non-dominated points (solid and dashed line) of $\Q^+$ when a) all
objectives are bounded from below, and when b) the first
objective is not bounded. $\Q^+$ is the unbounded light gray area
extending $\mathcal Q$.

\begin{figure}
\hfil\begin{tikzpicture}
\fill[black!4!white] (0.5,1)--(0.5,3)--(3.5,3)--(3.2,0.5)--(1,0.5)--cycle;
\draw[line width=1pt,color=black!30!white,fill=black!10!white] (0.5,1)--(0.5,2)--(2.8,2.5)
   --(3.2,1.8)--(2.5,0.8)--(1,0.5)--cycle;
\draw[ultra thin,->] (0,0)--(0,3.5);
\draw[ultra thin,->] (0,0)--(3.6,0);
\draw (1.8,1.5) node {$\mathcal Q$};
\draw[line width=1.5pt] (0.5,1)--(1,0.5);
\draw (0.5,1) node {$\bullet$} (1,0.5) node {$\bullet$};
\draw[line width=1.5pt,dashed,->] (0.5,1)--(0.5,3.2);
\draw[line width=1.5pt,dashed,->] (1,0.5)--(3.4,0.5);
\draw (1.7,-0.3) node {a)};
\end{tikzpicture}
\quad\hfil\quad
\begin{tikzpicture}
\fill[black!4!white] (0.5,1)--(0.5,3)--(3.5,3)--(3.2,0.5)--(1,0.5)--cycle;
\fill[fill=black!10!white] (-0.2,3.2)--(0.5,1)--(1,0.5)
    --(2.5,0.8)--(3.4,3.2)--cycle;
\draw[line width=1pt,color=black!30!white,->] (1,0.5)--(2.5,0.8)--(3.45,3.31);
\draw[ultra thin,->] (0,0)--(0,3.5);
\draw[ultra thin,->] (0,0)--(3.7,0);
\draw (1.8,1.8) node {$\mathcal Q$};
\draw[line width=1.5pt] (0.5,1)--(1,0.5);
\draw (0.5,1) node {$\bullet$} (1,0.5) node {$\bullet$};
\draw[line width=1.5pt,dashed,->] (1,0.5)--(3.55,0.5);
\draw[line width=1.5pt,->] (0.5,1)--(-0.25,3.3);
\draw (1.7,-0.3) node {b)};
\end{tikzpicture}\hfil
\caption{Pareto front with a) bounded, b) unbounded objectives}\label{fig:pareto}
\end{figure}
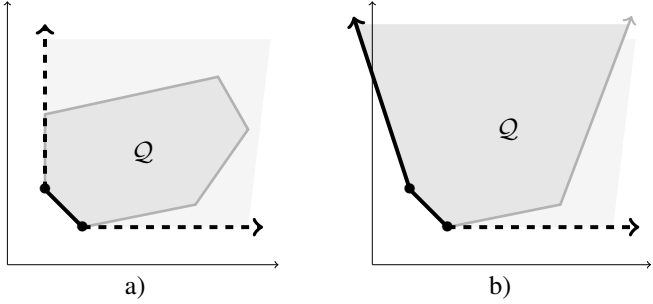
 
\subsection{Facial structure of polytopes}\label{subsec:facial}

Let us recall some facts concerning the facial structure of $n$-dimensional
convex closed polytopes. A {\em face} of such a polytope $\B
\subset  \R^n$ is the intersection of $\B$ and some closed
halfspace (including the empty set and the whole $\B$). 
Faces of dimension zero, one, $n-2$, and $n-1$ are called {\em
vertex, edge, ridge}, and {\em facet}, respectively.

An $n$-dimensional halfspace is specified as $H=\{x\in\R^n: xh\ge M\}$,
where $h\in\R^n$ is a non-null vector (normal), and $M$ is a scalar
(intercept). The {\em positive side} of $H$ is the open halfspace
$\{x\in\R^n: xh>M\}$; the negative side is defined similarly.
Each facet of $\B$ is identified with the halfspace which contains 
$\B$ and whose boundary intersects $\B$ in that facet. 
$\B$ is
just the intersection of all halfspaces corresponding to its facets.

The halfspace $H$ is {\em supporting} if it contains $\B$, and there is a
boundary point of $\B$ on the boundary of $H$.
The boundary hyperplane of a supporting halfspace intersects $\B$ in one
of its faces. All boundary points of $\B$ are in the relative interior of
exactly one face.

A {\em recession direction}, or {\em ray} of $\mathcal B$ is a vector 
$d\in \R^n$ such that $x+\lambda d \in \mathcal B$ for all real 
$\lambda\ge 0$. $d$ is {\em extreme} if whenever $d=d_1+d_2$
for two recession directions $d_1$ and $d_2$, then both $d_1$ and 
$d_2$ are non-negative multiples of $d$.

\subsection{Working with unbounded polytopes}\label{subsec:ideal}

When $\mathcal B$ is unbounded but does not contain a complete line -- which
will always be the case in this paper --, Burton and Ozlen's
``oriented projective geometry'' can be used
\cite{projective}. Intuitively this means that rays are represented by ideal
points, extreme rays are the ideal vertices which lie on a single
ideal facet determining the {\em ideal hyperplane}. Notions
of ordering and convexity can be extended to these objects seamlessly even
from computational point of view. In particular, all non-ideal points are on
the positive side of the ideal hyperplane. Thus in theoretical considerations,
without loss of generality, $\mathcal B$ can be assumed to be bounded.

\subsection{Assumptions on the MOLP problem}\label{subsec:assumptions}

In order that we could focus on the main points, we make some simplifying
assumptions on the \ref{eq:1} problem to be solved. The main restriction is that
all objectives are bounded from 
below. From this it follows immediately that neither $\Q$ nor $\Q^+$
contains a complete line;
and that the Pareto optimal solutions are the bounded faces of dimension
$p-1$ and less of $\Q^+$ as indicated on Figure \ref{fig:pareto}$\,$a).
One can relax this restriction at the expense of computing the extreme rays
of $\Q$ first (checking along that $\Q^+$ does not contain a complete
line), as is done by the software package Bensolve \cite{bensolve}. Further
discussions are postponed to Section \ref{sec:remarks}, where we also extend
our results to the case
where the ordering is given by some cone different from $\O^p$.

\begin{assumption}\rm
The optimization problem (\ref{eq:1}) satisfies the following conditions:
\begin{enumerate}
\item[1.]
the $n$-dimensional polytope $\mathcal A$ defined in (\ref{eq:A}) is not 
empty;
\item[2.]
 each objective in $\Q$ is bounded from below.
\end{enumerate}
\end{assumption}

\noindent
An immediate consequence of the first assumption is that
the projection $\mathcal Q=P\mathcal A$ is non-empty either,
and then $\Q^+=\Q+\O^p$ is full dimensional. According to Assumption 2,
$\Q$ and $\Q^+$ is contained in $y+\O^p$ for some (real)
vector $y\in\R^p$. 
Thus $\Q^+$ has exactly $p$ ideal vertices,
namely the positive endpoints 
of the coordinate axes, and these ideal vertices lie on the single ideal facet
of $\Q^+$.

\subsection{Benson's algorithm revisited}\label{subsec:alg1}

The {\em solution} of the \ref{eq:1} problem can be recovered from a 
description of the Pareto
front of $\Q^+$, which, in turn, is specified by the list of its
{\em vertices} and (non-ideal) {\em facets}. Indeed, the set of Pareto optimal points
is the union of those faces of $\Q^+$ which do not contain ideal points.
Thus solving \ref{eq:1} 
means {\em find all vertices and facets of the polytope $\Q^+$}.

\smallskip

Benson's ``outer approximation algorithm'' and its variants
\cite{loehne3,l-dual,loehne2,bensolve,loehne4} do exactly this, working in
the (low dimensional) objective space. These algorithms have two intertwined
parts: scalar optimization on one hand, and a combinatorial part on
the other. Their separation provides a deeper insight how these algorithms
work, and points toward a dual approach giving the title of this paper.

Benson's algorithm works in stages by maintaining a ``double description''
of an approximation of the final polytope $\Q^+$. A convex polytope is
uniquely determined as the convex hull of a set of points (for example, the
set of vertices) as well as the intersection of closed halfspaces
(for example, halfspaces corresponding to the facets). The ``double description''
refers to the technique keeping and maintaining both lists simultaneously.
The iterative algorithm stops when the last approximation equals $\Q^+$. At
this stage both the vertices and facets of $\Q^+$ are computed, thus the
\ref{eq:1} problem has been solved.

During the algorithm a new facet is added to the approximating polytope in
each iteration. The new facet is determined by solving a smartly chosen
scalar LP problem (specified from the description of the actual
approximation). Then the {\em combinatorial step} is executed: the new facet
is merged to the approximation by updating the facet and vertex lists. This
step is very similar to that of incremental vertex enumeration
\cite{avis-bremner,avis-fukuda,genov,zolotykh}, and can be parallelized.

Section \ref{sec:oracle} defines two types of black box algorithms which, 
on each call, provide data for the combinatorial part. The
{\em point separating oracle} separates a point from the
(implicitly defined) polytope $\Q^+$ by a halfspace. The {\em plane
separating oracle} is its dual: the input is a halfspace, and the oracle
provides a point of $\Q^+$ on the negative side of that halfspace.

Two general enumeration algorithms are specified in
Section \ref{sec:algorithms} which call these black box algorithms
repeatedly. 
It is proved that they terminate with a
description of their corresponding target polytopes. Choosing the initial
polytope and the oracle in some specific way, one gets
a convex hull algorithm, a (variant of) Benson's 
algorithm, the Ehrgott--L\"ohne--Shao dual algorithm,
and a new inner approximating algorithm.

Aspects of the combinatorial part are discussed in Section
\ref{sec:vertex-enum}.
Section \ref{sec:oracle-calls} explains how the oracles in these algorithms
can be realized. Finally, Section \ref{sec:remarks} discusses other
implementation details, limitations and possible generalizations.

\subsection{Our contribution}

The first skeletal algorithm in Section \ref{subsec:skeleton} is the
abstract versions of Benson's outer approximation algorithm and its
variants, where the combinatorial part (vertex enumeration) and the scalar
LP part has been separated. The latter one is modeled as an inquiry to a
{\em separating oracle} which, on each call, provides the data the
combinatorial part can work on. Using one of the the point
separation oracles in Section \ref{subsec:PSO}, one can recover, e.g.,
Algorithm 1 of \cite{l-dual}. The running time estimate in Theorem
\ref{thm:inner*} is the same (with the same proof) as the estimate in
\cite[Theorem 4.6]{l-dual}.

The other skeletal algorithm is the {\em dual} of the outer approximation
one. The ``inner'' algorithm of this paper uses the plane separation oracle
defined in  Section \ref{subsec:HSO}.
Another instance is Algorithm 2 of Ehrgott, L\"ohne and Shao
\cite{l-dual}, called ``dual variant of Benson's outer approximation
algorithm,''
which uses another weak plane separating oracle.
It would be interesting to see a more detailed description.

Studying these skeletal algorithms made possible to clarify the role of the
initial approximation, and prove general terminating conditions. While not
every separating oracle guarantees termination, it is not clear what are the
(interesting and realizable) sufficient conditions. Benson's outer
approximation algorithm is recovered by the first separation oracle defined in
Section \ref{subsec:PSO}. Other two separation oracles have stronger
termination guarantees, yielding the first version which is guaranteed to
take only the theoretically minimal number of iterations. Oracles in Section
\ref{subsec:HSO} are particularly efficient and contribute significantly
to the excellent performance of the new inner approximation algorithm.

It is almost trivial to turn any of the skeletal algorithms to an incremental
vertex (or facet) enumeration algorithm. Such algorithms have been studied
extensively. Typically there is a blow-up in
the size of the intermediate approximation; there are examples where even
the last but one approximation has size $\Omega(m^{\sqrt p/2})$ where $m$ is
the final number of facets (or vertices), and $p$ is the space dimension 
\cite{bremner}. This blow-up can be significant when $p$ is 6 or more.

An interesting extension to the usual single objective LP have been
identified, where not one but several goals are specified, and called 
``multi-goal LP.'' 
Optimization is done for the first goal, then {\em
within the solution space} the second goal is optimized, that is, in the
second optimization there is an additional constraint specifying that the
first goal has optimal value. Then within this solution space the third goal
is optimized, etc. Section \ref{subsec:multi-goal} sketches how existing LP
solvers can be patched to become an efficient multi-goal solver. We expect
more interesting applications for this type of solvers.

B\section{Separating oracles}\label{sec:oracle}

In this and in the following section $\B\subset\R^p$ is some {\em bounded},
closed, convex polytope with non-empty interior. As discussed in Section
\ref{subsec:ideal}, the condition that $\B$ is bounded can be replaced by
the weaker assumption that $\B$ does not contain a complete line.

A $p$-dimensional halfspace $H$ is specified by $\{y\in\R^p: yh\ge M\}$,
where the non-null vector $h\in\R^p$ is the normal, and the scalar $M$ is
the intercept. The {\em positive side} of $H$ is the open half-space
$\{y\in\R^p: yh > M\}$, the negative side is defined similarly. Facets of
the polytope $\B$ are identified with the halfspaces which contain $\B$ and
whose bounding hyperplane contains the facet.

\begin{definition}\label{def:PSO}\rm
A {\it point separating oracle} $\PSO(\B)$ for the polytope $\B\subset \R^p$
is a black box algorithm with the following input/output behavior:

input: a point $v\in\R^p$;

\hangafter=1\hangindent=1.5\parindent
output: ``inside'' if $v\in\B$; otherwise 
a halfspace $H$ corresponding to a facet of $\B$ such that 
$v$ is on the {\it negative} side of $H$.
\qed
\end{definition}

\begin{definition}\label{def:HSO}\rm
A {\it plane separating oracle} $\HSO(\B)$ for the polytope $\B\subset\R^p$
is a black box algorithm with the following input/output behavior:

input: a $p$-dimensional halfspace $H$;

\hangafter=1\hangindent=1.5\parindent
output: ``inside'' if $\B$ is a subset of $H$; otherwise
a vertex $v$ of $\B$ on the {\it negative side} of $H$.
\qed
\end{definition}

The main point is that only the oracle uses the polytope $\B$
in the enumeration algorithms of Section
\ref{sec:algorithms}, and $\B$ might not be defined
by a collection of linear constraints. In particular, this
happens when the algorithm is used to solve the multiobjective problem,
where the polytope passed to the oracle is $\Q$ or $\Q^+$.

The two oracles are dual in the sense that if $\B^*$ is the geometric dual
of the convex polytope $\B$ with the usual point versus hyperplane
correspondence \cite{ziegler}, then $\PSO(\B)$ and $\HSO(\B^*)$ are
equivalent: when a point is asked from oracle $\PSO(\B)$, ask the dual of
this point from $\HSO(\B^*)$, and return the dual of the answer.

\medskip

The object returned by a {\em weak} separating oracle is not required to
be a facet (or vertex), but only a supporting halfspace (or a boundary
point) separating the input from the polytope.
The returned separating object, however, cannot be arbitrary, must have some
structural property. Algorithms of Section \ref{sec:algorithms} work with
the weaker separating oracles defined below, but the performance guarantees
are exponentially 
worse. On the other hand such weak oracles can be realized easier.
Actually, strong separating oracles in Section \ref{sec:oracle-calls}
are implemented as tweaked weak oracles.

\begin{definition}\label{def:wPSO}\rm
A {\it weak point separating oracle} $\wPSO(\B$) for polytope $\B$
is a black box algorithm which works as follows. It fixes an
internal point $o\in\B$.

input: a point $v\in\R^p$;

\hangafter=1\hangindent=1.5\parindent
output: ``inside'' if $v\in\B$; otherwise connect $v$ to the fixed internal
point $o$, compute where this line segment intersects the boundary of $\B$,
and return a supporting halfspace $H$ of $\B$ whose boundary touches $\B$
at that point (this $H$ separates $v$ and $\B$).
\qed
\end{definition}

\begin{definition}\label{def:wHSO}\rm
A {\it weak plane separating oracle} $\wHSO(\B)$ for polytope $\B$ is a
black box algorithm which works as follows.

input: a halfspace $H$;

\hangafter=1\hangindent=1.5\parindent
output: ``inside'' if $\B$ is contained in $H$; otherwise
a boundary point $v$ of $\B$ on the negative side of $H$ 
which is farthest away from its bounding hyperplane.
\qed
\end{definition}

A strong oracle is not necessarily a weak one (for example, it can return 
any vertex on the negative side of $H$, not only the one which is farthest away
from it), but can always give a response which is consistent with being a
weak oracle. Similarly, there is always a valid response of a weak oracle
which qualifies as correct answer for the corresponding strong oracle.
 
While strong oracles are clearly dual of each other, it is not clear
whether the weak oracles are dual, and if yes, in what sense of duality.

\section{Approximating algorithms}\label{sec:algorithms}

The skeletal algorithms below are called {\em outer} and {\em inner}
approximations. The {\em outer} name reflects the fact that the algorithm
approximates the target polytope from outside, while the {\em inner} does it
from inside. The outer algorithm is an abstract version of Benson's
algorithm where the computational and combinatorial parts are separated.

\subsection{Skeletal algorithms}\label{subsec:skeleton}

On input both algorithms require two convex polytopes: $\S$ and $\B$. The
polytope $\S$ is the {\em initial approximation}; it is specified by
double description: by the list of its vertices and facets. The polytope
$\B$ is passed to the
oracle, and is specified according to the oracle's requirements. Both $\S$ 
and $\B$ are assumed to be closed, convex polytopes with non-empty interior. 
For this exposition they are also assumed to be {\em bounded}; 
this condition can
(and will) be relaxed to the weaker condition that none of them contains
a complete line.

\begin{algorithm}[Outer approximation]\label{alg:outer}\rm
Set $\S_0=\S$ as the initial approximation. In each approximating polytope
certain vertices will be marked as ``final.'' Initially this set is empty. 

Consider the $i$-th approximation $\S_i$. If all vertices of $\S_i$ are
marked final, then stop, the result is $\mathcal R=\S_i$. Otherwise pick a
non-final vertex $v_i\in\S_i$, and call the (weak) point separating oracle
with point $v_i$. If the oracle returns ``inside'', then mark $v_i$ as
``final'', and repeat. Otherwise the oracle returns (the equation of) a
halfspace $H$. Let $\S_{i+1}$ be the intersection of $\S_i$ and $H$.
Keep the ``final'' flag on previous vertices.
(Actually, all final vertices of $\S_i$ will be vertices of $\S_{i+1}$.)
Repeat.
\qed
\end{algorithm}

\begin{theorem}\label{thm:outer}
The outer approximation algorithm using the $\PSO(\B)$ oracle
terminates with the polytope $\mathcal
R=\B\cap\S$. The algorithm makes at most $v+f$ oracle calls, where $v$ is
number of vertices of $\mathcal R$, and $f$ is the number of facets of $\B$.
\end{theorem}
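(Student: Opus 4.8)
The plan is to maintain two invariants along the run of Algorithm \ref{alg:outer} and to read off both the output and the iteration bound from them. Writing $\S_0,\S_1,\dots$ for the successive approximations, I would first prove by induction on $i$ that $\B\cap\S\subseteq\S_i\subseteq\S$, and in fact $\B\cap\S_i=\B\cap\S$. The inclusion $\S_i\subseteq\S$ is immediate, since each step only intersects with a further halfspace. For the rest, note that $\PSO(\B)$ always returns a halfspace $H$ corresponding to a \emph{facet} of $\B$, so $\B\subseteq H$; hence cutting by $H$ neither loses nor gains any point of $\B$, giving $\B\cap\S_{i+1}=\B\cap\S_i$ and $\B\cap\S\subseteq\S_{i+1}$.

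The second invariant concerns the ``final'' vertices. When a vertex $v_i$ is marked final the oracle answered ``inside'', so $v_i\in\B$; as $v_i$ is a vertex of $\S_i\subseteq\S$ we get $v_i\in\B\cap\S$, and since $v_i$ is extreme in $\S_i$ while $\B\cap\S\subseteq\S_i$, it is extreme in $\B\cap\S$, i.e.\ a vertex of $\B\cap\S$. I would also record that final vertices persist: a vertex of $\S_i$ lying in $\B\subseteq H$ survives the intersection $\S_{i+1}=\S_i\cap H$ and remains a vertex there. Thus distinct final markings correspond to distinct vertices of $\B\cap\S$, bounding the number of ``inside'' answers by $v$.

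Next I would count the ``cut'' iterations. The key observation is that each facet of $\B$ can be returned at most once: once the oracle outputs $H$ at step $i$, we have $\S_j\subseteq\S_{i+1}=\S_i\cap H\subseteq H$ for every $j>i$, so any later queried vertex $v_j\in\S_j$ lies in the \emph{closed} halfspace $H$ and is therefore not on its \emph{open} negative side, whence the oracle can never output $H$ again. Since distinct facets are distinct halfspaces, the number of cut iterations is at most $f$. Together with the previous paragraph this yields the total bound $v+f$ and, being finite, forces termination. (Termination can also be seen directly, since a cut strictly removes $v_i\notin H$, and within any fixed approximation only finitely many vertices can be marked final.)

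Finally I would identify the output. At termination every vertex of the last polytope $\S_N$ is final, hence in $\B$, and also in $\S$ because $\S_N\subseteq\S$; as $\S_N$ is bounded it equals the convex hull of its vertices, so $\S_N\subseteq\B\cap\S$. Combined with the invariant $\B\cap\S\subseteq\S_N$ this gives $\mathcal R=\S_N=\B\cap\S$. I expect the ``each facet at most once'' step to be the main obstacle: it is what simultaneously pins down the $f$ bound and guarantees termination, and it hinges on playing the closed-halfspace containment $\S_j\subseteq H$ against the open negative side in the oracle's specification. The persistence-of-final-vertices claim—needed so that the ``inside'' count is honestly bounded by the vertex count $v$ of the \emph{final} $\mathcal R$ rather than of the intermediate $\S_i$—is the other point requiring care.
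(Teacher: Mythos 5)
Your proposal is correct and follows essentially the same route as the paper's proof: the invariant $\B\cap\S\subseteq\S_i$, the bound of $v$ on ``inside'' answers via final vertices being vertices of $\mathcal R$, the closed-versus-open halfspace argument showing each facet of $\B$ is returned at most once, and the identification of the terminal polytope with $\B\cap\S$. You merely state in contrapositive-free form (and with the convex-hull/boundedness step made explicit) what the paper argues by saying the algorithm cannot stop while $\B\cap\S\subsetneq\S_i$; the mathematical content is the same.
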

\begin{proof}
First we show that if the algorithm terminates then the result is
$\B\cap\S$. Each approximating polytope is an intersection of halfspaces
corresponding to certain facets of $\B$ (as $\PSO(\B)$ returns 
halfspaces which correspond to facets of $\B$), 
and all halfspace corresponding to facets of $\S_0$ thus $\B\cap\S\subseteq\S_i$.

If $\B\cap\S$ is a proper subset of $\S_i$, then $\S_i$ has a vertex $v_i$
not in $\B\cap\S$. This vertex $v_i$ cannot be marked ``final'' as final
vertices are always points of $\B\cap\S$. (All vertices of $\S_i$ are points
of the initial $\S$, and when the oracle returns ``inside'', the queried point is in
$\B$.) Thus the algorithm cannot stop at $\S_i$.

Second, the algorithm stops after making at most $v+f$ oracle calls. Indeed,
there are at most $v$ oracle calls which return ``inside'' (as a final
vertex is never asked from the oracle). Moreover, the oracle cannot return
the same facet $H$ of $\B$ twice. Suppose $H$ is returned at the $i$-th
step. Then $\S_{i+1}=\S_i\cap H$. If $j>i$ and $v_j$ is a vertex of
$\S_j\subseteq\S_{i+1}$, then $v_j$ is on the non-negative side of $H$,
i.e., the oracle cannot return the same $H$ for the query $v_j$.

From the discussion above it follows that vertices marked as ``final'' are
vertices of $\B\cap\S$, thus they are vertices of all subsequent
approximations. This justifies the sentence in parentheses in the
description of the algorithm.
\end{proof}

\begin{theorem}\label{thm:outer*}
The outer approximation algorithm using the weak $\wPSO(\B)$ oracle
terminates with the polytope $\mathcal R = \B\cap\S$.
The algorithm makes at most $v+2^f$ oracle calls, where $v$ is the number of
vertices of $\mathcal R$, and $f$ is the number of facets of $\B$.
\end{theorem}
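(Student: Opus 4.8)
The plan is to mirror the structure of the proof of Theorem~\ref{thm:outer}, adapting the two parts---correctness and the iteration bound---to the weaker oracle. For correctness, I would argue as before that $\B\cap\S\subseteq\S_i$ holds throughout: each $\S_i$ is an intersection of $\S$ with supporting halfspaces of $\B$ returned by $\wPSO(\B)$, and every supporting halfspace of $\B$ contains $\B$, hence contains $\B\cap\S$. Conversely, if $\B\cap\S$ were a proper subset of $\S_i$ at termination, then $\S_i$ would have a vertex $v_i\notin\B\cap\S$; since final vertices lie in $\B$ (the oracle returned ``inside'' for them) and all vertices of $\S_i$ lie in $\S$, such a $v_i$ cannot be final, so the algorithm would not stop. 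This shows the result equals $\B\cap\S$ exactly as in the strong case; the correctness argument does not actually use that the returned halfspaces correspond to facets.

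The real difference is the iteration bound, and this is where the weak oracle forces a worse estimate. In the strong case the key combinatorial fact was that a facet $H$ of $\B$ is never returned twice, giving the clean $v+f$ bound. With $\wPSO(\B)$ the returned halfspace is merely a \emph{supporting} halfspace whose boundary touches $\B$ at the point where the segment $ov_i$ crosses $\partial\B$; it need not correspond to a facet, and in principle many distinct supporting halfspaces can be generated. The plan is therefore to bound the number of \emph{distinct} supporting halfspaces that can ever arise. The natural route is to classify each such halfspace by the face of $\B$ in whose relative interior its touching point lies---equivalently, by the set of facets of $\B$ that are tight at that point. Since every nonempty face of $\B$ is the intersection of some subset of the $f$ facets, there are at most $2^f$ such faces, hence at most $2^f$ combinatorially distinct supporting halfspaces the weak oracle can return.

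To convert this into an iteration count I would reuse the ``never twice'' mechanism of the strong proof at the level of these supporting halfspaces. Once $H$ is returned at step $i$, we have $\S_{i+1}=\S_i\cap H$, so every later vertex $v_j$ (for $j>i$) lies in $\S_{i+1}\subseteq H$, i.e.\ on the non-negative side of $H$; consequently the oracle cannot return that same $H$ again for a later query. Thus each of the at most $2^f$ supporting halfspaces is returned at most once, contributing at most $2^f$ ``cutting'' calls, and as before there are at most $v$ calls that return ``inside'' (a final vertex is never re-queried). Adding these gives the claimed bound of $v+2^f$ oracle calls.

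I expect the main obstacle to be making the classification step fully rigorous, namely justifying that the touching point of the weak oracle's response always lies in the relative interior of a well-defined face so that the halfspace is determined (up to the scaling of its normal and intercept) by that face, and that the ``same $H$'' used in the no-repeat argument is well-defined independently of the scaling. A secondary subtlety is that the weak oracle's fixed internal point $o$ and the particular segment $ov_i$ select \emph{which} supporting halfspace is returned among possibly several at a touching point; one must ensure the counting by faces still dominates, which it does because the bound $2^f$ is on the number of faces and each face yields the oracle's touching point at most once along a fixed query. These points are exactly where the exponential blow-up enters, and I would keep the argument at the level of ``distinct returned halfspaces $\le 2^f$'' rather than attempting a sharper polyhedral count.
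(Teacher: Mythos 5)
Your correctness argument matches the paper's and is fine, but your iteration bound has a genuine gap in the counting step. You bound the number of \emph{distinct returned halfspaces} by classifying each one by the face of $\B$ containing its touching point, and then invoke the no-repeat argument ``the same $H$ is never returned twice.'' These two pieces do not combine: the classification map from supporting halfspaces to faces is many-to-one, since infinitely many distinct supporting halfspaces of $\B$ can touch it along the very same face (think of all the hyperplanes supporting a polygon at a single vertex). So your argument permits the oracle to return, at every iteration, a \emph{new} supporting halfspace whose touching point lies in one and the same face of $\B$ --- no halfspace ever repeats, yet the number of cutting calls is unbounded by $2^f$. Your closing remark that ``each face yields the oracle's touching point at most once along a fixed query'' is exactly the statement that needs proof, and nothing in your argument supplies it; as written it is circular.

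What is needed --- and what the paper proves --- is a no-repeat property at the level of \emph{faces}, not halfspaces. Let $F_i$ be the face in which the boundary of the returned halfspace $H_i$ meets $\B$; it contains the point where the segment $v_io$ crosses $\partial\B$. The key geometric lemma is: if an internal point of a segment $v_jo$ lies in $F_i$, then $v_j$ is strictly on the negative side of $H_i$ (the crossing point lies on the bounding hyperplane of $H_i$ while $o$ lies strictly on its positive side, so moving from $o$ through the crossing point one exits into the open negative side). Now for $j>i$ the vertex $v_j$ belongs to $\S_j\subseteq\S_{i+1}\subseteq H_i$, hence is on the non-negative side of $H_i$; by the lemma the segment $v_jo$ cannot meet $F_i$, and since $F_j$ contains the crossing point of $v_jo$, we get $F_j\neq F_i$. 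Thus the \emph{faces} $F_i$ are pairwise distinct across all cutting calls, and since $\B$ has at most $2^f$ faces (each being an intersection of facets), the bound $v+2^f$ follows. Without this lemma your proof does not go through; with it, your classification by faces becomes superfluous, because the face itself is the object being counted.
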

\begin{proof}
Similarly to the previous proof, for each iteration we have 
$\B\cap\S\subseteq\S_i$, as each $H$ contains $\B$. If
$\B\cap\S$ is a proper subset of $\S_i$, then $\S_i$ has a vertex $v_i$ not
in $\B\cap\S$, thus not marked as ``final'', and the algorithm cannot stop
at this iteration.

To show that the algorithm eventually stops, we bound the number of oracle
calls. If for the query $v_i$ the response is ``inside'', then $v_i$ is a 
vertex of $\mathcal R$, and $v_i$ will not be asked again. This happens at
most as many times as many vertices $\mathcal R$ has.

Otherwise the oracle's response is a supporting halfspace $H_i$ whose
boundary touches $\B$ at a unique face $F_i$ of $\B$, which contains the
point where the segment $v_io$ intersects the boundary of $\B$.

If an internal point of the line segment $v_jo$ intersect the face $F_i$,
then $v_j$ must be on the negative side of $H_i$.
As $\S_{i+1}$ and all subsequent
approximations are on the non-negative side of $H_i$, if($j>i$ then $v_jo$
cannot intersect $F_i$, and then the face $F_j$ necessarily
differs from the face $F_i$. Thus there can be no more iterations than the
number of faces of $\B$. As each face is the intersection of some facets,
this number is at most $2^f$.
\end{proof}

Now we turn to the dual of outer approximation.

\begin{algorithm}[Inner approximation]\label{alg:inner}\rm
Set $\S_0=\S$ as the initial approximation. In each approximation certain
facets will be marked as ``final.'' Initially this set is empty.

Consider the $i$-th approximation $\S_i$. If all facets of $\S_i$ are
final, then stop, the result is $\mathcal R=\S_i$. Otherwise pick a
non-final facet $H_i$ of $\S_i$, and call the (weak) plane separating oracle
with the hyperplane $H_i$. If the oracle returns ``inside,'' then
mark $H_i$ as ``final,'' and repeat. Otherwise the oracle returns a boundary
point
$v_i$ of $\B$ on the negative side of $H_i$. Let $\S_{i+1}$ be the convex hull
of $\S_i$ and $v_i$. Keep the ``final'' flag on previous facets. (Actually,
all final facets of $\S_i$ will be facets of $\S_{i+1}$.) Repeat.
\qed
\end{algorithm}

\begin{theorem}\label{thm:inner}
The inner approximation algorithm using the $\HSO(\B)$ oracle terminates
with $\mathcal R =\conv(\B\cup\S)$, the convex hull of $\B$ and $\S$. The
algorithm makes at most $v+f$ oracle calls, where $v$ is the number of
vertices of $\B$, and $f$ is the number of facets of $\mathcal R$.
\end{theorem}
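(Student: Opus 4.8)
The plan is to dualize the proof of Theorem \ref{thm:outer}, trading the roles of vertices and facets and of intersection and convex hull. Writing $\mathcal R=\conv(\B\cup\S)$, I would first pin down the terminal polytope by two inclusions. The easy direction is $\S_i\subseteq\mathcal R$ for every $i$, shown by induction: $\S_0=\S\subseteq\mathcal R$, and since each step sets $\S_{i+1}=\conv(\S_i\cup\{v_i\})$ with $v_i$ a vertex of $\B$ returned by $\HSO(\B)$, we have $v_i\in\B\subseteq\mathcal R$, so convexity gives $\S_{i+1}\subseteq\mathcal R$. For the reverse inclusion I would invoke the stopping condition: at termination every facet of $\S_i$ is marked ``final,'' and a facet $H$ is marked final only when the oracle answers ``inside,'' i.e.\ $\B\subseteq H$. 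Because $H$ is a facet halfspace of $\S_i\supseteq\S$ it also contains $\S$, hence $\mathcal R=\conv(\B\cup\S)\subseteq H$. As $\S_i$ is the intersection of its facet halfspaces, all of which now contain $\mathcal R$, we get $\mathcal R\subseteq\S_i$, and combining the two inclusions yields $\S_i=\mathcal R$.

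Next I would rule out premature termination. If $\S_i\subsetneq\mathcal R$, then $\mathcal R$ is not contained in the intersection of the facet halfspaces of $\S_i$, so at least one facet halfspace of $\S_i$ fails to contain $\mathcal R$; that facet cannot be final (final facets contain $\B$ and $\S$, hence $\mathcal R$), so the stopping condition is not met and the algorithm proceeds. To bound the number of oracle calls I would sort them by the type of response. An ``inside'' response marks exactly one facet final; a returned-point response hands back a vertex $v_i$ of $\B$ strictly on the negative side of the queried facet $H_i$, so $v_i\notin\S_i$ and $\S_{i+1}$ strictly enlarges $\S_i$. I would then argue that the oracle never returns the same vertex twice: once $v_i$ is added, $v_i\in\S_j$ for all $j>i$, so $v_i$ lies on the non-negative side of every later queried facet halfspace and cannot be a valid (negative-side) response again. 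Hence the returned-point responses inject into the vertices of $\B$, giving at most $v$ of them.

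The main obstacle is the ``inside'' count, which rests on the parenthetical claim that a final facet persists as a facet of all later approximations, and in fact is a facet of $\mathcal R$; this is the dual of the persistence of final vertices used in Theorem \ref{thm:outer}. I would prove it by noting that when facet $H$ of $\S_i$ is marked final, its bounding hyperplane meets $\S_i$ in a $(p-1)$-dimensional face $F$, and every subsequently added vertex is a vertex of $\B\subseteq H$, hence lies in the halfspace $H$ and cannot push past its boundary. Thus the bounding hyperplane of $H$ keeps cutting out a $(p-1)$-dimensional face of each $\S_j$ with $j\ge i$ (a newly added point may lie on the hyperplane, which only enlarges the facet, never destroys it), so $H$ remains a facet. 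Since $\S_i\subseteq\mathcal R\subseteq H$ and $F\subseteq H\cap\mathcal R$, the supporting face $H\cap\mathcal R$ has dimension at least $p-1$ and, being a proper face of the full-dimensional $\mathcal R$, exactly $p-1$; so $H$ is a facet of $\mathcal R$, and distinct final facets give distinct facets of $\mathcal R$. This bounds the ``inside'' responses by $f$.

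Putting the two counts together gives at most $v+f$ oracle calls, which simultaneously proves termination (each call either marks a new facet final or adds a new vertex of $\B$, and both can happen only finitely often) and the stated bound; the concluding equality $\S_i=\mathcal R$ from the first paragraph then certifies the output. I expect the only delicate point to be the facet-persistence argument above, since it is the one place where a genuinely geometric fact about how adding interior points interacts with an existing facet must be checked; the remaining steps are the straightforward duals of the corresponding lines in the proof of Theorem \ref{thm:outer}.
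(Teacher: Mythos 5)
Your proposal is correct and takes essentially the same approach as the paper: the paper's own proof of this theorem merely states that the claims follow by dualizing the proof of Theorem \ref{thm:outer} (vertices for facets, intersection for convex hull, $\PSO$ for $\HSO$) and leaves the details to the reader. Your argument is exactly that dualization carried out in full, including the facet-persistence claim (final facets remain facets of later approximations and are facets of $\mathcal R$) that justifies the parenthetical sentence in Algorithm \ref{alg:inner}.
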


\begin{proof}
This algorithm is the dual of Algorithm \ref{alg:outer} above. The claims
can be proved along the proof of Theorem \ref{thm:outer} by replacing
notions by their dual: vertices by facets, intersection by convex hull,
calls to $\PSO$ by calls to $\HSO$, etc. Details are left to the
interested reader.
\end{proof}

\begin{theorem}\label{thm:inner*}
The inner approximation algorithm using the weak $\wHSO(\B)$ oracle
terminates with $\mathcal R =\conv(\B\cup\S)$.
The algorithm makes at most $2^v+f$ oracle calls, where $v$ is the
number of vertices of $\B$, and $f$ is the number of facets of $\mathcal R$.
\end{theorem}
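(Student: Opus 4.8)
The plan is to mirror the proof of Theorem~\ref{thm:outer*} under the point/facet duality already invoked for Theorem~\ref{thm:inner}, replacing vertices by facets, intersection by convex hull, the oracle $\wPSO$ by $\wHSO$, and ``negative side of a separating halfspace'' by ``negative side of the queried facet.'' There are two things to establish: correctness (that $\mathcal R=\conv(\B\cup\S)$ on halting, and that the algorithm cannot stop prematurely), and the bound of $2^v+f$ on the number of oracle calls.

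For correctness I would first show by induction that $\S_i\subseteq\conv(\B\cup\S)$ for every $i$: the base case is $\S_0=\S$, and each point $v_i$ returned by $\wHSO(\B)$ is a boundary point of $\B$, so $\S_{i+1}=\conv(\S_i\cup\{v_i\})\subseteq\conv(\B\cup\S)$. Next, if a facet $H$ of $\S_i$ has been marked final, the oracle answered ``inside'' on $H$, i.e.\ $\B\subseteq H$; since $\S\subseteq\S_i\subseteq H$ as well, convexity gives $\conv(\B\cup\S)\subseteq H$. Consequently, if \emph{all} facets of $\S_i$ are final then $\S_i$, being the intersection of its facet halfspaces, contains $\conv(\B\cup\S)$, and combined with the inclusion above this yields $\S_i=\conv(\B\cup\S)$. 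Conversely, if $\S_i$ is a proper subset of $\conv(\B\cup\S)$, pick $q\in\conv(\B\cup\S)$ with $q\notin\S_i$; since $\S_i$ is the intersection of its facet halfspaces, some such halfspace $H$ has $q$ on its negative side. Were $\B\subseteq H$, then from $\S\subseteq H$ and convexity we would get $\conv(\B\cup\S)\subseteq H$, contradicting $q\notin H$; hence $\B\not\subseteq H$, so $H$ is not final and the algorithm does not stop.

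For the count I would separate ``inside'' from ``not inside'' responses. Each ``inside'' response marks a distinct facet as final, and (by the dual of the argument justifying Theorem~\ref{thm:inner}) a final facet persists as a facet of every later approximation, hence is a facet of $\mathcal R$; this gives at most $f$ such responses. For a ``not inside'' response to a queried facet $H_i=\{y:yh_i\ge M_i\}$, the returned point $v_i$ is the boundary point of $\B$ farthest from $\partial H_i$ on the negative side, i.e.\ a minimizer of $y\mapsto yh_i$ over $\B$; thus $v_i$ lies in the face $F_i=\{y\in\B:yh_i=\min_{z\in\B}zh_i\}$. The heart of the proof is to show that $F_i$ never recurs. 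For $j>i$ we have $v_i\in\S_{i+1}\subseteq\S_j\subseteq H_j$, so $v_ih_j\ge M_j$, whereas $v_j$, being on the negative side of $H_j$, satisfies $v_jh_j<M_j\le v_ih_j$; since $F_j$ consists of the minimizers of $yh_j$ over $\B$ and $v_j\in F_j$, this forces $v_i\notin F_j$, and as $v_i\in F_i$ we conclude $F_j\ne F_i$. Therefore the faces $F_i$ are pairwise distinct, and since $\B$ has at most $2^v$ faces (each face being determined by the set of vertices it contains), there are at most $2^v$ ``not inside'' responses.

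Adding the two bounds gives at most $2^v+f$ oracle calls, which also proves termination. The step I expect to be the crux is the non-recurrence of $F_i$: it is exactly here that the specific ``farthest-away'' requirement on $\wHSO$ is used, guaranteeing that $v_i$ lands in the supporting face $F_i$ and that any later query, whose halfspace already contains $v_i$, must select a strictly different face. This is the precise dual of the face-counting step in the proof of Theorem~\ref{thm:outer*}.
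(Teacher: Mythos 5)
Your proof is correct and follows essentially the same route as the paper's: correctness via the final-facet halfspaces, and termination by charging each ``not inside'' response to a distinct face of $\B$ (at most $2^v$ of them), using the farthest-point property of $\wHSO$ to rule out recurrence of a face. The only cosmetic difference is that you take $F_i$ to be the full exposed face of minimizers of $yh_i$ over $\B$, whereas the paper takes the face containing $v_i$ in its relative interior; the counting argument is identical.
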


\begin{proof}
As weak oracles are not dual, the proof is not as straightforward
as it was for the previous theorem. First, if the algorithm stops, it
computes the convex hull. Indeed, $\S_i\subseteq \mathcal R$, and if they
differ, then $\S_i$ has a facet with the corresponding halfspace $H$ 
and a vertex of $\B$ is on the negative side of $H$. 
This facet cannot be final, thus the algorithm did not stop.

To show termination, observe that at most $f$ oracle calls return
``inside''. In other calls the query was the halfspace $H_i$, and the
oracle returned $v_i$ from the boundary of $\B$ such that the distance
between $v_i$ and $H_i$ is maximal. Consider the face $F_i$ of $\B$ which
contains $v_i$ in its relative interior. 
We claim that the same $F_i$ cannot occur for any subsequent query.
Indeed, all points of $F_i$ are at exactly the same (negative) distance from
$H_i$, $v_i\in F_i$, $v_i$ is a point of $\S_{i+1}$ and all subsequent
approximations. If $j>i$ and $H_j$ is a facet of $\S_j$, then $v_i$ is on the
non-negative side of $H_j$. Consequently $v_i$ is not an element of the face
$F_j$, and then $F_i$ and $F_j$ differ.

Thus the number of such queries cannot be more than the number of faces in
$\B$, which is at most $2^v$.
\end{proof}

\subsection{A convex hull algorithm}\label{subsec:convex-hull}

The skeleton algorithms can be used with different oracles and initial
polytopes. An easy application is computing the convex hull of a point set
$V\subseteq \R^p$. In this case $\B=\conv(V)$, the convex hull of $V$. If
the initial approximation $\S$ is inside this convex hull, then the second
Algorithm \ref{alg:inner} just returns the convex hull $\B$. There is,
however, a problem. Algorithm \ref{alg:inner} uses the facet enumeration as
discussed in Section \ref{sec:vertex-enum}. This method generates the exact
facet list at each iteration, but keeps all vertices from the earlier
iterations, thus the vertex list can be redundant. When the algorithm stops,
the facet list of the last approximation gives the facets of the convex
hull. The last vertex list contains all of its vertices but might contain
additional points as well. A solution is to check all elements in the vertex
list by solving a scalar LP: is this element a a convex linear combination
of the others? Another option is to ensure that points returned by the
oracle are vertices of the convex hull -- namely using a strong oracle --,
and vertices of the initial polytope are not, thus they can be omitted
without any computation. This is what Algorithm \ref{alg:conv-hull} does.

\begin{algorithm}[Convex hull]\label{alg:conv-hull}\rm
On input a point set $V\subset \R^p$ construct the $p$-dimensional simplex
$\S$ as described in (\ref{alg:conv-hull}.1); then execute Algorithm
\ref{alg:inner} with the $\HSO$ oracle defined in (\ref{alg:conv-hull}.2).
Vertices of $\conv(V)$ are elements of the last vertex list minus vertices of
the initial polytope $\S$.
\begin{itemize}
\item[(\ref{alg:conv-hull}.1)]
Let $\bar v\in\R^p$ be the average of $V$, and choose $\eps$ be a small
positive number (say, smaller than all the positive distances 
$|v_i-\bar v_i|$ for all $v\in V$). Let $\S$ be the $p$ dimensional simplex
with vertices $\bar v$ and
$\bar v+\eps e_i$, where $e_i$ is the $i$-th coordinate vector. The $p+1$ facets
of $\S$ can be computed easily.
\item[(\ref{alg:conv-hull}.2)]
The input of the $\HSO$ oracle is the halfspace $H$, the output should be a
vertex of the convex hull. For each $v\in V$ compute the distance of $v$
from the halfspace $H$. If none of the distances is negative,
then return ``inside''. Otherwise let $V'$ be the set of all points where
the distances are minimal. The lexicographically minimal element of $V'$
will be a vertex of $\conv(V)$.
\qed
\end{itemize}
\end{algorithm} 

%

\subsection{Algorithms solving multiobjective linear
    optimization problems}\label{subsec:MOLP-algs}

Let us now turn to the problem of solving the multiobjective linear
optimization problem. As discussed in Section
\ref{subsec:alg1}, this means that we have to find a double description of
the (unbounded) polytope $\Q^+$. To achieve this, both skeletal algorithms
from Section \ref{subsec:skeleton}
can be used. Algorithm \ref{alg:benson} below is (a variant of) Benson's
original outer approximation algorithm \cite{l-dual}, while Algorithm
\ref{alg:minner}, announced in the title of this paper, uses inner
approximation. The second algorithm has several features which makes it
better suited for problems with large number (at least six) of objectives,
especially when $\Q^+$ has relatively few vertices.

As $\Q^+$ is unbounded, both algorithms below use ideal elements from the
extended ordered projective space $\widebar\R^p$ as elaborated in
\cite{projective}. Elements of $\widebar\R^p$ can be implemented using
homogeneous coordinates. For $1\le i\le p$ the ideal point at the positive
end of the $i$-th coordinate axis $e_i$ is denoted by $\e_i\in\widebar\R^p$.
For a (non-ideal) point $v\in \R^p$ the $p$-dimensional simplex with
vertices $v$ and $\{\e_i:1\le i\le p\}$ is denoted by $v+\E$. One facet of
$v+\E$ is the ideal hyperplane containing all ideal points; the other $p$
facets have equation $(y-v)e_i=0$, and the corresponding halfspaces are
$\{y\in\R^p: (y-v)e_i \ge 0\}$ as $v+\E$ is a subset of them The description
of how the oracles in Algorithms \ref{alg:benson} and \ref{alg:minner} are
implemented is postponed to Section \ref{sec:oracle-calls}.

The input of Algorithms \ref{alg:benson} and \ref{alg:minner} are the
$n$-dimensional polytope $\mathcal A$ as in (\ref{eq:A}), and the $p\times
n$ projection matrix $P$ defining the objectives, as in (\ref{eq:Q}). The
output of both algorithms is a double description of $\Q^+$. The first one
gives an exact list of its vertices (and a possibly redundant list of its
facets), while the second one generates a possibly redundant list of the
vertices, and an exact list of the facets.

\begin{algorithm}[Benson's outer algorithm]\label{alg:benson}\rm
Construct the initial polytope $\S\supseteq \Q^+$ as described in
(\ref{alg:benson}.1). Then execute Algorithm \ref{alg:outer} with the oracle
$\PSO(\Q^+)$, or $\wPSO(\Q^+)$, defined in Section \ref{subsec:PSO}.
\begin{itemize}
\item[(\ref{alg:benson}.1)]
Pick $v\in\R^p$ so that for all $1\le i\le p$, $v_i$ is a lower bound for
the $i$-th objective $P_i$, the $i$-th row of $P$. The initial outer
approximation $\S$ is the simplex $v+\E$. The coordinate $v_i$ can be found,
e.g., by solving the scalar LP 
$$
    \mbox{find } ~~ v_i = \min\nolimits_x\, \{ P_ix: x\in\mathcal A\}.
\dispqed$$
\end{itemize}
\end{algorithm}
By Theorems \ref{thm:outer} and \ref{thm:outer*} this algorithm terminates
with $\Q^+$, as required. When creating the initial polytope $\S$ in step
\ref{alg:benson}.1, the scalar LP must have a feasible solution (otherwise
$\mathcal A$ is empty), and should not be unbounded (as otherwise the $i$-th
objective is unbounded from below).
After $\mathcal S$ has been created successfully we know that
its ideal vertices are also vertices of $\Q^+$, thus they can be marked as
``final''. As these are the only ideal vertices of $\Q^+$, no further ideal
point will be asked from the oracle at all. These facts can be used to
simplify the oracle implementation.

Recall that $v+\E$ is the $p$-dimensional simplex whose vertices are $v$ and
the ideal points at the positive endpoints of the coordinate axes.

\begin{algorithm}[Inner algorithm]\label{alg:minner}\rm
Construct the initial approximation $\S$ as described in
(\ref{alg:minner}.1). Then execute Algorithm \ref{alg:inner} with the oracle
$\HSO(\Q)$, or $\wHSO(\Q)$ described in Section \ref{subsec:HSO}.
\begin{itemize}
\item[(\ref{alg:minner}.1)]
Pick a point $v\in\Q$; the initial polytope $\S$ is the simplex
$v+\E$. This $v$ can be found as $v=Px\in\R^p$ for any
$x\in\mathcal A$ in (\ref{eq:A}); or $v$ could be the answer of the
oracle to the halfspace query $\{ y\in\R^p: ye \ge M\}$,
where $e\in\R^p$ is the all-one vector and $M\in\R$ is a very large scalar.
\qed
\end{itemize}
\end{algorithm}

Theorems \ref{thm:inner} and \ref{thm:inner*} claim that this algorithm
computes a double description of $\Q^+$, as the convex hull of $\Q$ and
$\S$ is just $\Q^+$. Observe that in this case the oracle answers questions
about the polytope $\Q$, and not about $\Q^+$.
The ideal facet of $\S$ is also a facet
of $\Q^+$, thus it can be marked ``final'' and then it won't be asked 
from the oracle. No ideal point should ever be returned by the oracle.

To simplify the generation of the initial approximation $\S$, the oracle may
accept any normal vector $h\in\R^p$ as a query, and return a (non-ideal) vertex (or
boundary point) $v$ of $\Q$ which minimizes the scalar product $hv$, and
leave it to the caller to decide whether $\Q$ is on the
non-negative side of the halfspace $\{y\in\R^p: yh\ge M\}$. This happens
when $vh\ge M$ for the returned point $v$.
%

If the initial approximation $\S$ is $v+\E$ for some vertex $v$ of $\Q$ (as
indicated above) and
the algorithm uses the strong oracle $\HSO(\Q$, then vertices of all
approximations are among the vertices of $\Q^+$. Consequently the final
vertex list does not contain redundant elements. In case of using a weak 
plane separating oracle or a different initial approximation, the final
vertex list might contain additional points. To get an exact solution the
redundant points must be filtered out.

\section{Vertex and facet enumeration}\label{sec:vertex-enum}

This section discusses the combinatorial part of the skeleton Algorithms
\ref{alg:outer} and \ref{alg:inner} in some detail. For a more throughout
exposition of this topic please consult
\cite{avis-bremner,avis-fukuda,fukuda-prodon}. At each step of these
algorithms the approximation $\S_i$ is updated by adding a new halfspace
(outer algorithm) or a new vertex (inner algorithm). To maintain the double
description, we need to update both the list of vertices and the list of
halfspaces. In the first case the new halfspace is added to the list
(creating a possibly redundant list of halfspaces), but the vertex list
might change significantly. In the second case it is the other way around:
the vertex list grows by one (allowing vertices which are inside the convex
hull of the others), and the facet (halfspace) list changes substantially.
As adding a halfspace and updating the exact vertex list is at the heart of
incremental vertex enumeration \cite{genov,zolotykh}, it will be discussed
first.

Suppose $\S_i$ is to be intersected by the (new) halfspace $H$. Vertices of
$\S_i$ can be partitioned as $V^+\cup V^-\cup V^0$: those which are on the
positive side, on the negative side, and those which are on the boundary of 
$H$. As the
algorithm proceeds, $H$ always cuts $\S_i$ properly, thus neither $V^+$ nor
$V^-$ is empty; however $V^0$ can be empty. Vertices in $V^+$ and in $V^0$
remain vertices of $\S_{i+1}$; vertices in $V^-$ will {\rm not} be vertices
of $\S_{i+1}$ any more, and should be discarded. The new vertices of
$\S_{i+1}$ are the points where the boundary of $H$ intersects the edges 
of $\S_i$ in some 
internal point. Such an edge must have one endpoint in $V^+$, and the other
endpoint in $V^-$. To compute the new vertices, for each pair $v_1v_2$ with
$v_1\in V^+$ and $v_2\in V^-$ we must decide whether $v_1v_2$ is and edge of
$S_i$ or not. This can be done using the well-known and popular necessary
and sufficient combinatorial test stated in Proposition \ref{edge-test}$\,$b)
below.
As executing this test is really time consuming, the faster necessary 
condition a) is checked first which filters out numerous non-edges.
A proof for the
correctness of the tests can be found, e.g., in \cite{fukuda-prodon}.

\begin{proposition}\label{edge-test}
{\rm a)} If $v_1v_2$ is an edge, then there must be at least $p-1$ different
facets containing both $v_1$ and $v_2$.

{\rm b)} $v_1v_2$ is an edge if and only if for every other vertex
$v_3$ 
there is a facet $H$ which contains $v_1$ and $v_2$ but not $v_3$.
\qed
\end{proposition}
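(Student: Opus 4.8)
The plan is to derive both parts from two standard facts about the face lattice of a full-dimensional bounded polytope $\B\subset\R^p$ (see \cite{ziegler}): first, every proper face of $\B$ equals the intersection of the facets containing it; second, the intersection of any family of faces is again a face. Throughout, $v_1,v_2$ are distinct vertices and $E=\conv(v_1,v_2)$ is the segment joining them; I assume $p\ge 2$, the only interesting case.

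For part a), suppose $E$ is an edge and let $H_1,\dots,H_k$ be the facets containing it, with normals $h_1,\dots,h_k$. By the first fact $E=\B\cap\bigcap_i H_i$. Since both endpoints lie on each bounding hyperplane, the edge direction $d=v_2-v_1$ satisfies $h_id=0$, so every $h_i$ lies in the $(p-1)$-dimensional subspace $d^\perp$. First I would show these normals in fact span all of $d^\perp$: pick the midpoint $x_0$ of $E$, which lies in the relative interior of $E$, so the facets active at $x_0$ are exactly $H_1,\dots,H_k$. If the $h_i$ spanned a proper subspace of $d^\perp$, then $\bigcap_i H_i$ would be an affine flat of dimension $\ge 2$ through $x_0$, and since all remaining facet constraints are strict at $x_0$, a small neighbourhood of $x_0$ inside this flat would lie in $\B$; hence $E$ would have dimension $\ge 2$, a contradiction. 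Thus $h_1,\dots,h_k$ span the $(p-1)$-dimensional space $d^\perp$, forcing $k\ge p-1$.

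For the forward direction of part b), assume $E$ is an edge and let $v_3$ be any vertex other than $v_1,v_2$. As the only extreme points of the segment $E$ are its endpoints, $v_3\notin E=\B\cap\bigcap_i H_i$; since $v_3\in\B$, some facet $H_i$ among those containing $E$ must fail to contain $v_3$, and this $H_i$ contains $v_1,v_2$ but not $v_3$, as required. For the converse, let $G$ be the intersection of $\B$ with all facets containing both $v_1$ and $v_2$. By the second fact $G$ is a face of $\B$, and it contains $v_1$ and $v_2$. The hypothesis says that every other vertex $v_3$ is excluded by some such facet, whence $v_3\notin G$; so $v_1,v_2$ are the only vertices of the bounded face $G$. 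A bounded polytope is the convex hull of its vertices, so $G=\conv(v_1,v_2)=E$, a face of dimension one, i.e. an edge.

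The main obstacle I expect is the spanning argument in part a): making rigorous that the facets active at a relative-interior point of $E$ are precisely those containing $E$, and that $E$ would grow in dimension if their normals failed to span $d^\perp$. This rests on the local conical description of $\B$ near $x_0$, together with the quoted fact that every boundary point lies in the relative interior of exactly one face. The remaining steps are routine once the two facial facts are in hand, and the boundedness used in part b) is harmless here, since Section \ref{subsec:ideal} lets us assume $\B$ bounded without loss of generality.
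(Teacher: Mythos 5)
Your proof is correct, but note that the paper itself contains no proof of this proposition: it is stated with a pointer to the literature (``A proof for the correctness of the tests can be found, e.g., in \cite{fukuda-prodon}''), so there is nothing internal to compare against. What you have written is essentially the standard argument that the cited reference formalizes: part a) by the rank/dimension count (the normals of the facets through an edge must span the $(p-1)$-dimensional orthogonal complement of the edge direction, else the face $\B\cap\bigcap_i\partial H_i$ would have dimension at least two), and part b) by identifying $G$, the intersection of $\B$ with all facets through $v_1$ and $v_2$, as the minimal face containing both vertices and testing whether it is exactly the segment. Your two supporting facts (a proper face is the intersection of the facets containing it; an intersection of faces is a face) are exactly the right lemmas, and the delicate step you flag -- that the facets active at the midpoint $x_0$ are precisely those containing the edge -- follows even more simply than by a local cone argument: if a supporting constraint $hy\ge M$ is tight at $x_0=(v_1+v_2)/2$ and holds at $v_1,v_2$, then it is tight at both endpoints. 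Two harmless caveats worth recording: in the converse of b) you implicitly need at least one facet through $v_1,v_2$ (otherwise $G=\B$), which the hypothesis supplies as soon as $\B$ is full-dimensional with $p\ge2$ and hence has a third vertex; and your use of boundedness ($G=\conv$ of its vertices) is legitimate in context by the reduction of Section \ref{subsec:ideal}. Compared with the paper's citation, your argument buys self-containedness and makes visible exactly which facial facts are used; the citation buys brevity and covers the redundant-halfspace variant the paper remarks on afterwards, which your proof, as stated for genuine facets, does not address.
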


Observe that Proposition \ref{edge-test} remains valid if the set of facets
are replaced by the boundary hyperplanes of any collection of halfspaces 
whose intersection is $\S_i$. That is, we can have ``redundant'' elements in
the facet list. We need not, and will not, check whether adding a new facet
makes other facets ``redundant.''

Both conditions can be checked using the vertex--facet adjacency matrix,
which must also be updated in each iteration. In practice the adjacency matrix
is stored
twice: using the bit vector $\mathbf H_v$ for each vertex $v$ (indexed by
the facets), and the bit vector $\mathbf V_H$ for each facet $H$ (indexed by
the vertices). The vector $\mathbf H_v$ at position $H$
contains 1 if and only if $v$ is on the facet $H$; similarly for the
vector $\mathbf V_H$.
Condition a) can be expressed as the intersection $\mathbf H_{v_1}\land
\mathbf H_{v_2}$ has at least $p-1$ ones. Condition b) is the same as
the bit vector
$$
     \bigwedge \big\{ \mathbf V_H: \, H\in \mathbf H_{v_1}\land \mathbf H_{v_2}
    \big\}
$$
contains only zeros except for positions $v_1$ and $v_2$. Bit operations are
quite fast and is done on several (typically 32 or 64) bits
simultaneously. Checking which vertex pairs in $V^+\times V^-$ are edges, and
when such a pair is an edge computing the coordinates of the new vertex, 
can be done in parallel. Almost the complete execution time of vertex
enumeration is taken by this edge checking procedure. Using multiple threads
the total work can be evenly distributed among the threads with very little
overhead, practically dividing the execution time by the number of the 
available threads. See Section \ref{subsec:parallel} for further remarks.

Along updating the vertex and facet lists, the adjacency vectors must be
updated as well. In each step we have one more facet, thus vectors $\mathbf
H_v$ grow by one bit. The size of facet adjacency vectors change more
erratically. We have chosen a lazy update heuristics: there is an upper
limit on this size. Until the list size does not exceeds this limit, newly
added vertices are assigned a new slot, and positions corresponding to
deleted vertices are put on a ``spare indices'' pool. When no more new slots
are available, the whole list is compressed, and the upper limit is expanded
if necessary.

\medskip

In Inner Algorithm \ref{alg:inner} the sequence of intermediate polytopes
$\S_i$ grows by adding a new vertex rather than cutting it by a new facet.
In this case the facet list is to be updated. As this is the dual of 
the procedure above, only the main points are highlighted. In this case the
vertex list can be redundant, meaning some of them can be inside the convex
hull of others, but the facet list must contain no redundant elements.

Let $v$ be the new vertex to be added to the polytope $\S_i$. Partition the
facets of $\S_i$ as $H^+\cup H^-\cup H^0$ such that $v$ is on the positive
side of facets in $H^+$, on the negative side of facets in $H^-$, and is on
hyperplane defined by the facets in $H^0$. The new facets of $\S_{i+1}$ are 
facets in $H^+$ and
$H^0$, plus facets determined by $v$ and a ridge (a $(p-2)$-dimensional
face) which is an intersection of one facet from $H^+$ and one facet from
$H^-$. One can use the dual of Proposition \ref{edge-test} to check whether
the intersection of two facets is a ridge or not.
 We state this condition
explicitly as this dual version seems not to be known.

\begin{proposition}\label{ridge-test}
{\rm a)} If the intersection of facets $H_1$ and $H_2$ is a ridge, then
they share at least $p-1$ vertices in common.

{\rm b)} $H_1\cap H_2$ is a ridge if and only if for every other facet $H_3$
there is a vertex $v\in H_1\cap H_2$ which is not in $H_3$.
\qed
\end{proposition}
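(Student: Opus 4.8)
The plan is to prove Proposition \ref{ridge-test} as the exact dual of Proposition \ref{edge-test}, exploiting the point--hyperplane duality already invoked in the paper. Since the excerpt states that the edge test is correct (with a reference to \cite{fukuda-prodon}), the cleanest route is not to reprove the combinatorial facts from scratch but to transport them across the duality $\B \leftrightarrow \B^*$. Under this correspondence, vertices of $\B$ become facets of $\B^*$, facets of $\B$ become vertices of $\B^*$, an edge (a $1$-dimensional face) of $\B$ corresponds to a ridge (a $(p-2)$-dimensional face) of $\B^*$, and the incidence relation ``vertex $v$ lies on facet $H$'' is preserved as ``facet $v^*$ contains vertex $H^*$.'' I would first set up this dictionary precisely, then read off each clause of Proposition \ref{ridge-test} as the literal image of the corresponding clause of Proposition \ref{edge-test}.

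Concretely, for part a): applying the edge test to $\B^*$, if $H_1^* H_2^*$ is an edge of $\B^*$ then at least $p-1$ facets of $\B^*$ contain both $H_1^*$ and $H_2^*$. Dualizing, $H_1\cap H_2$ being a ridge of $\B$ means at least $p-1$ vertices of $\B$ lie on both $H_1$ and $H_2$, i.e.\ are shared by the two facets. For part b): the edge test says $H_1^* H_2^*$ is an edge of $\B^*$ iff for every other vertex $v^*$ of $\B^*$ there is a facet of $\B^*$ containing $H_1^*, H_2^*$ but not $v^*$; translating every term back to $\B$ gives exactly the stated necessary and sufficient condition for $H_1\cap H_2$ to be a ridge. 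Thus both clauses follow verbatim once the duality dictionary is in place.

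The step I expect to require the most care is justifying that the duality is applicable and incidence-preserving in the generality needed here. The polytope $\B$ is assumed bounded (or, via the oriented projective setup of Section \ref{subsec:ideal}, line-free and treatable as bounded), full-dimensional, with non-empty interior, so one may fix an interior point as the center of polarity and obtain a well-defined polar dual $\B^*$ whose face lattice is the order-reversal of that of $\B$; this is the standard fact from \cite{ziegler}. One subtlety is that Proposition \ref{edge-test} is stated for \emph{any} collection of halfspaces whose intersection is $\S_i$, allowing redundant facets, and the remark following it confirms this. I would note that the same latitude carries over under duality: a redundant facet of $\B$ corresponds to a redundant (interior) vertex of $\B^*$, and the test tolerates such redundancy, so the ridge test likewise remains valid when the vertex list of $\B$ contains redundant points. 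This matches precisely the situation in Algorithm \ref{alg:inner}, where the vertex list may indeed be redundant while the facet list is exact.

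Alternatively, should one prefer a self-contained argument, the same proof of \cite{fukuda-prodon} can be run directly: a ridge of $\B$ is a $(p-2)$-face, hence is contained in exactly two facets, and is itself determined as the intersection of those two facets restricted away from any third facet; I would then verify that $H_1\cap H_2$ fails to be a ridge precisely when some third facet $H_3$ contains every vertex of $H_1\cap H_2$, which is the negation of clause b). I expect the direct route to reproduce, line for line, the dual of the edge-test argument, so I would present the duality-transport proof as primary and remark that the direct verification is identical after reversing inclusions.
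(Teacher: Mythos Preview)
Your proposal is correct and follows exactly the route the paper takes: the paper gives no separate proof of Proposition~\ref{ridge-test} at all, simply introducing it as ``the dual of Proposition~\ref{edge-test}'' and closing with a \qedsymbol. Your duality-transport argument via the polar $\B^*$ is a faithful (and more detailed) elaboration of precisely this remark, including the observation about tolerance of redundant elements, which mirrors the paper's comment after Proposition~\ref{edge-test}.
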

%

\section{Realizing oracle calls}\label{sec:oracle-calls}

This section describes how the oracles in Algorithms \ref{alg:benson} and
\ref{alg:minner} can be realized. In both cases a weak separating oracle is
defined first, and the we show how can it be tweaked to become a strong
oracle. The solutions are not completely satisfactory. Either the hyperplane
returned by the point separating oracle for Benson's algorithm
\ref{alg:benson} will be a facet with ``high probability'' only, or the
oracle requires a non-standard feature from the underlying scalar LP solver.
Fortunately weak oracles work as well, thus the improbable but possible
failure of the ``probability'' oracle is not fatal: it might add further
iterations but neither the correctness nor termination is affected.
(Numerical stability is another issue.) In Section \ref{subsec:multi-goal}
we describe how a special class of scalar LP solvers -- including the GLPK
solver \cite{glpk} used in the implementation of {\em Bensolve}
\cite{bensolve} and {\em Inner} -- can be patched to find the
lexicographically minimal element in the solution space.

The polytopes on which the oracles work are defined by the $m\times n$ and 
$p\times n$ 
matrices $A$ and $P$, respectively, and by the vector $c\in\R^m$ as follows:
$$\begin{array}{l@{\:=\:}l}
   \mathcal A & \{ x\in\R^n:\, Ax=c, \,~~ x \ge 0 \,\}, \\[3pt]
   \Q & P\mathcal A = \{ Px \,: x \in\mathcal A\,\}, \\[3pt]
   \Q^+ & \Q+\O^p = \{ y+z\,: y\in\Q, ~ z \in\O^p\,\}.
\end{array}$$
The next two sections describe how the weak and strong oracles required by
Algorithms \ref{alg:benson} and \ref{alg:minner} can be realized.

\subsection{Point separation oracle}\label{subsec:PSO}

The oracle's input is a point $v\in\R^p$, and the response should be a
supporting halfspace (weak oracle) or a facet (strong oracle) of $\Q^+$
which separates $v$ and $\Q^+$. As discussed in Section
\ref{subsec:MOLP-algs}, several simplifying assumptions can be 
made, namely
\begin{itemize}\setlength\itemsep{2pt}
\item $\Q$ is not empty and bounded from below in each objective direction;
\item $v$ is not an ideal point;
\item if $v$ is in $\Q^+$, then it is a boundary point.
\end{itemize}

Let us consider first the weak oracle. According to Definition
\ref{def:wPSO} the $\wPSO(\Q^+)$ must choose a fixed internal point
$o \in\Q^+$. The
vertices of the ideal facet of $\Q^+$ are the positive endpoints of the
coordinate axes. If all coordinates of $e^*\in\R^p$ are positive, then the
ideal point corresponding to this vector is in the relative interior of the
ideal facet. Fix this vector $e^*$, and let $o$ be the ideal point
corresponding to $e^*$.
While $o$ is not an internal point of $\Q^+$, this choice works as no
ideal point is ever asked from the oracle according to the assumptions above.
Thus let us fix $o$ this way.

Given the query point $v$, the ``segment'' $vo$ is the ray $\{v-\lambda e^*:
\lambda<0\}$. The $vo$ line intersects the boundary of $\Q^+$ at the point
$\hat v = v - \hat\lambda e^*$, where $\hat\lambda$ is the solution of the
scalar LP problem
\begin{equation}[P$(v,e^*)$]\label{eq:wPSO}
\hat\lambda =  \max\nolimits_{\lambda,x}\,\left\{
    \lambda:\, Px \le v - \lambda e^*,\, Ax=c,\,x\ge 0 \right\}.
\end{equation}
Indeed, this problem just searches for the largest $\lambda$ for which
$v-\lambda e^* \in \Q^+$. By the assumptions above \ref{eq:wPSO} always
has an optimal solution.

Now the line $vo$ intersects $\Q^+$ in the ray $\hat vo$. Thus
$v\in\Q^+$ if and only if $\hat\lambda\ge 0$; in this case the oracle returns
``inside.'' In the $\hat\lambda<0$ case the oracle should
find a supporting halfspace $H$ to $\Q^+$ at the boundary point
$\hat v$. Interestingly, the normal of these supporting hyperplanes
can be read off from the solution space
of the {\em dual} of \ref{eq:wPSO}, where the dual
variables are $s\in\R^p$ and $t\in\R^m$:
\begin{equation}[D$(v,e^*)$]\label{eq:wPSO-dual}
   \min\nolimits_{s,t}\,\left\{
      s^Tv + t^Tc:\, s^TP + t^TA \ge 0,\, s^Te^* = 1,\, s\ge 0
   \right\}.
\end{equation}
As the primal \ref{eq:wPSO} has an optimal solution, the
strong duality theorem says that the dual \ref{eq:wPSO-dual} has the same
optimum $\hat\lambda$.

\begin{proposition}\label{prop:1}
The problem \ref{eq:wPSO-dual} takes its optimal value $\hat\lambda$
at $s\in\R^p$  (together with some $t\in\R^m)$ if and only if
$s$ is a normal of a supporting halfspace to $\Q^+$ at $\hat v$.
\end{proposition}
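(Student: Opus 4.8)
The plan is to read the dual D$(v,e^*)$ as a two-level minimization and to invoke linear programming duality a second time, now on an ``inner'' optimization hidden inside the dual constraints. For $s\ge 0$ set $\phi(s):=\min\{s^Ty:\,y\in\Q^+\}$. I would first check that $\phi(s)$ is finite: by Assumption 2 the set $\Q$ lies in $y_0+\O^p$ for some $y_0$, so $s^Ty\ge s^Ty_0$ on $\Q$ for $s\ge 0$, and since $s^Tz\ge 0$ for $z\in\O^p$ the minimum over $\Q^+$ is attained with the recession part equal to $0$. Hence $\phi(s)=\min\{(P^Ts)^Tx:\,Ax=c,\ x\ge 0\}$, a genuine LP over the nonempty polytope $\mathcal A$.

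Next I would record what it means for $s$ to be the normal of a supporting halfspace at $\hat v$. A halfspace $\{y:\,s^Ty\ge M\}$ can contain $\Q^+=\Q+\O^p$ only if $s\ge 0$, since otherwise some coordinate direction $e_i\in\O^p$ would drive $s^Ty$ to $-\infty$; and given $s\ge 0$ it supports $\Q^+$ at $\hat v$ exactly when $M=s^T\hat v$ and $\hat v$ attains the minimum, i.e. $s^T\hat v=\phi(s)$. Because $e^*>0$ forces $s^Te^*>0$ for any nonzero $s\ge 0$, I may rescale and assume the normalization $s^Te^*=1$ that is built into D$(v,e^*)$ without loss of generality; thus ``$s$ is a supporting normal at $\hat v$'' is equivalent to ``$s\ge 0$, $s^Te^*=1$, and $\phi(s)=s^T\hat v$.''

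The decomposition step is the heart of the argument. Transposing the feasibility constraint $s^TP+t^TA\ge 0$ gives $P^Ts+A^Tt\ge 0$, which (writing $u=-t$) is precisely feasibility of the LP dual of $\min\{(P^Ts)^Tx:\,Ax=c,\ x\ge 0\}$, with dual objective $c^Tu=-t^Tc$. As $\mathcal A\ne\emptyset$ and $\phi(s)$ is finite, this inner LP attains its optimum, so strong duality yields $\min_t\{t^Tc:\,P^Ts+A^Tt\ge 0\}=-\phi(s)$, attained by some $t$. Minimizing over $t$ first therefore gives
\[
\min_{t}\{s^Tv+t^Tc\}=s^Tv-\phi(s),
\]
so the overall dual optimum is $\hat\lambda=\min_{s\ge 0,\,s^Te^*=1}\big(s^Tv-\phi(s)\big)$, consistent with strong duality for P$(v,e^*)$.

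Finally I would establish the key inequality and assemble the equivalence. For any feasible $s$, since $\hat v\in\Q^+$ we have $\phi(s)\le s^T\hat v$, whence
\[
s^Tv-\phi(s)\ \ge\ s^T(v-\hat v)=\hat\lambda\,s^Te^*=\hat\lambda,
\]
using $v-\hat v=\hat\lambda e^*$ and $s^Te^*=1$; equality holds if and only if $\phi(s)=s^T\hat v$. Combining the three steps: there exists $t$ with $(s,t)$ dual-optimal iff the $t$-minimized value $s^Tv-\phi(s)$ equals $\hat\lambda$, iff $\phi(s)=s^T\hat v$, iff $s$ is the normal of a supporting halfspace to $\Q^+$ at $\hat v$ (the matching $t$ being any optimizer of the inner LP, whose existence was secured above). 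I expect the only delicate point to be the justification that the inner LP actually attains its optimum, so that the required $t$ genuinely exists; this rests precisely on $\mathcal A$ being nonempty and on Assumption 2 guaranteeing $\phi(s)>-\infty$.
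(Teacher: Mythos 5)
Your proof is correct and follows essentially the same route as the paper's: both arguments reduce ``$s$ is a supporting normal at $\hat v$'' to a valid inequality over $\mathcal A$ (equivalently, to the inner LP $\min\{(P^Ts)^Tx :\, Ax=c,\ x\ge 0\}$), then apply strong LP duality/Farkas to trade the quantification over $x\in\mathcal A$ for the existence of the multiplier $t$, and finish by substituting $\hat v = v-\hat\lambda e^*$. Your packaging via the support function $\phi(s)$ and the weak-duality inequality $s^Tv-\phi(s)\ge\hat\lambda$ is a reorganization of the paper's direct Farkas-style equivalence, not a different method.
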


\begin{proof}
By definition $s$ is a normal of a supporting halfspace to $\Q^+$ at 
$\hat v$ if and only if $s(y-\hat v)\ge 0$ for all $y\in\Q^+$. From here it
follows that $s\ge 0$ (as otherwise $sy$ is not bounded from below for
some large enough $y\in\Q^+$), and as $e^*$ has all positive coordinates,
$s$ can be normalized by assuming $s^Te^*=1$. 

Knowing that $s\ge 0$, if $sy\ge s\hat v$ holds for some $y\in\R^p$, then it
also holds for every $y'\ge y$. Thus it is enough to require $sy\ge s\hat v$
for all $y\in\Q$ only, that is,
\begin{equation}\label{eq:prop-1}
    s^T(Px) \ge s^T\hat v ~~~~\mbox{ for all $x\in\mathcal A$}.
\end{equation}
The polytope $\mathcal A$ is non-empty. According to the strong duality
theorem, all $x\in\mathcal A$ satisfies a linear inequality if and only if 
it is a linear combination of the defining (in)equalities of $\mathcal A$.
Thus (\ref{eq:prop-1}) holds if and only if there is a vector $(-t)\in\R^m$
such that
$$
    s^TP \ge -t^TA , ~~~\mbox{ and } ~~~ s^T \hat v = -t^T c .
$$
Plugging in $\hat v=v-\hat\lambda e^*$ and using $s^Te^*=1$ we get that $s$
is a normal of a supporting halfspace of $\Q^+$ at $\hat v$ if and only if
$$
   s^TP + t^TA \ge 0, ~~~\mbox{ and } ~~~ s^Tv + t^T c = \hat\lambda,
$$
namely $(s,t)$ is in the solution space of \ref{eq:wPSO-dual}.
\end{proof}

Proposition \ref{prop:1} indicates immediately how to define a weak
separating oracle.

\begin{oracle}[weak $\wPSO(\Q^+)$]\label{def:oracle-1}\rm 
Fix the vector $e^*\in\R^p$ with all positive coordinates.
On input $v\in\R^p$, solve
\ref{eq:wPSO-dual}. Let the minimum be $\hat \lambda$, and the place where
the minimum is attained be $(\hat s,\hat t)$. If $\hat\lambda\ge 0$, then
return ``inside''. Otherwise let $\hat v = v - \hat\lambda e^*$, and the
supporting halfspace to $\Q^+$ at $\hat v$ is $\{ y\in\R^P: \hat sy\ge\hat
s\hat v \}$.
\qed
\end{oracle}

The oracle works with any positive $e^*$.
Choosing $e^*$ to be the all one vector is a possibility which
has been made by {\em Bensolve} and other variants. Choosing other vectors
$e^*$ can be advantageous.
Observe that the supporting plane at the boundary point $\hat v$ is a facet 
if it is is {\em not} in any face of dimension $(p-2)$ or less. Given the point
$v\notin\Q^+$ one would like to avoid directions $e^*$ for which $v-\lambda e^*$ 
hits $\Q^+$ in such a low-dimensional face. The set of these bad directions
has measure zero (in the usual Lebesgue sense), so we expect that choosing
$e^*$ ``randomly'' the returned halfspace will be a facet with high probability.
This heuristic argument works quite well in practice.

\begin{oracle}[probabilistic $\wPSO(\Q^+)$]\label
{def:oracle-2}\rm
Choose $e^*$ randomly according to the uniform distribution from the 
$p$-dimensional cube $[1,2]^p$.
Then execute Oracle \ref{def:oracle-1} using this random vector $e^*$.
\qed
\end{oracle}

As there is no guarantee that Oracle \ref{def:oracle-2} always 
returns a facet, this is only a weak separating oracle.

\medskip

Proposition \ref{prop:1} suggests another way to extract a facet among the
supporting hyperplanes. The {\em solution space} of \ref{eq:wPSO-dual} in
the first $p$ variables spans the (convex polyhedral) space of the {\em
normals} of the supporting hyperplanes. Facet normals are the {\em extremes}
among them. Pinpointing a single extreme among them is easy. As in the case
of the convex hull algorithm in Section \ref{subsec:convex-hull}, choose its
{\em lexicographically minimal} element: this will be the normal of a facet.

\begin{oracle}[strong $\PSO(\Q^+)$]\label{def:oracle-3}\rm
Fix the vector $e^*\in\R^p$ with all positive coordinates, e.g., take the
all one vector. On input
$v\in\R^p$ solve \ref{eq:wPSO-dual}. The LP solver must return the minimum,
and the {\it lexicographically minimal} $s\in\R^p$ from the solution space.
Proceed as in Oracle \ref{def:oracle-1}.
\qed
\end{oracle}

The question how to find the lexicographically minimal element in the
solution space is addressed in the next section.

\subsection{A ``multi-goal'' scalar LP solver}\label{subsec:multi-goal}

Scalar LP solvers typically stop when the optimum value is reached, and
report the optimum, the value of the structural variables,
and optionally the value of the dual variables. The lexicographically minimal
vector from the solution space can be computed by solving additional
(scalar) LP problems by adding more and more constraints which restrict the
solution space. For Oracle \ref{def:oracle-3} this sequence could be
$$\begin{array}{l@{\:=\:}l}
   \hat\lambda & \min\nolimits_{s,t}\,\{
      s^Tv+t^Tc:\, \mbox{original constraints} ~
   \},\\[3pt]
   \hat s_1 & \min\nolimits_{s,t}\,\{ s_1:\,
      s^Tv+t^Tc=\hat\lambda ~ + \mbox{original constraints} ~ \}, \\[3pt]
   \hat s_2 & \min\nolimits_{s,t}\,\{ s_2:\,
      s_1=\hat s_1,~ s^Tv+t^Tc=\hat\lambda +{}\\
   \multicolumn{2}{r}{~~~~~ + \mbox{original constraints} ~ \},} \\[3pt]
   \hat s_3 & \min\nolimits_{s,t}\,\{ s_3:\,
      s_2=\hat s_2,~ s_1=\hat s_1,~ s^Tv+t^Tc=\hat\lambda+{}\\
   \multicolumn{2}{r}{~~~~~ + \mbox{original constraints} ~ \},} \\[3pt]
   \multicolumn{2}{l}{\mbox{etc.}}
\end{array}$$
The first LP solves \ref{eq:wPSO-dual}. The second one fixes this solution
and searches for the minimal $s_1$ within the solution space, etc.
Fortunately, certain simplex LP solvers -- including GLPK \cite{glpk} -- can
be patched to do this task more efficiently. Let us first define what do we
mean by a ``multi-goal'' LP solver, show how it can be used to realize the
strong Oracle \ref{def:oracle-3}, and then indicate an efficient
implementation.

\begin{definition}\label{def:multi-goal-LP}\rm
A {\it multi-goal} LP solver accepts as input a set of linear constraints
and several goal vectors, and 
returns a feasible solution after minimizing for all goals in sequence:
\begin{trivlist}\setlength\itemsep{2pt}
\item[\quad]
\textbf{Constraints}: a collection of linear constraints.

\item[\quad]
\textbf{Goals}: real vectors $g_1$, $g_2$, $\dots$, $g_k$.
\end{trivlist}
Let $\mathcal A_0=\{x: x$ satisfies the given constraints $\}$; and for
$1\le j\le k$ let $\mathcal A_j=\{x\in\mathcal A_{j-1}: g_jx$ is minimal
among $x\in\mathcal A_{j-1}\}$.

\begin{trivlist}\setlength\itemsep{2pt}
\item[\quad]
\hangindent=2\parindent\hangafter=1
\textbf{Result}: 
Return any vector $x$ from the last set $\mathcal A_k$, or report failure if
any of $\mathcal A_j$ is empty (including the case when $g_jx$ is not
bounded from below).
\qed
\end{trivlist}
\end{definition}

\noindent
Oracle \ref{def:oracle-3} can be realized by such a multi-goal LP
solver. Fix $e^*\in\R^p$. We need the lexicographically minimal $s\in\R^p$
from the solution space of \ref{eq:wPSO-dual}. 
Let $e_i\in\R^{p+m}$ be the coordinate vector with 1 in position $i$.
Call the multi-goal LP solver as follows:
\begin{trivlist}\setlength\itemsep{2pt}
\item[\quad]\textbf{Constraints}: $s^TP + t^TA \ge 0$, $s^Te^*=1$, $s\ge 0$.
\item[\quad]\textbf{Goals}: the $p+m$ dimensional vectors $(v,c)$, $e_1$, $e_2$, 
  $\dots$, $e_p$.
\end{trivlist}
Parse the $p+m$ dimensional output as $(\hat s, \hat t)$, and compute
$\hat\lambda=\hat s^Tv+\hat t^Tc$, $\hat v = v-\hat\lambda e^*$. Return ``inside'' if
$\hat\lambda\ge 0$, and return the halfspace equation
$\{ y\in\R^P: \hat sy \ge \hat s\hat v\}$ otherwise.

\medskip

In the rest of the section we indicate how GLPK \cite{glpk} can
be patched to become an efficient multi-goal LP solver. After 
parsing the constraints, GLPK internally transforms them to the form
$$
              Ax = c, ~~~ a\le x \le b.
$$
Here $x\in\R^n$ is the vector of all variables, $A$ is some $m\times n$ matrix,
$m\le n$, $c\in\R^m$ is a constant vector, and $a,b\in\R^m$ are the {\em
lower} and {\em upper bounds}, respectively. The lower bound $a_i$ can be
$-\infty$ and the upper bound $b_i$ can be $+\infty$ meaning that the
variable $x_i$ is not bounded in that direction. The two bounds can be
equal, but $a_i\le b_i$ must always hold (otherwise the problem has no
feasible solution). The function to be minimized is $gx$ for some fixed goal
vector $g\in\R^n$. (During this preparation only slack and auxiliary
variables are added, thus $g$ can be got from the original goal vector by
padding it with zeroes.)

GLPK implements the simplex method by maintaining a set of $m$ {\em base
variables}, an invertible $m\times m$ matrix $M$ so that the product $MA$
contains (the permutation of) a unit matrix at column positions
corresponding to the base variables. Furthermore, for each non-base variable
$x_i$ there is a flag indicating whether it takes its lower bound $a_i$ or
upper bound $b_i$. Fixing the value of non-base variables according
to these flags, the value of base variables can be computed from the equation
$MA x = Mc$. If the values of the computed base variables fall 
between the corresponding
lower and upper bounds, then the arrangement (base, $M$, flags) is
{\em primal feasible}.\footnote{Observe that the constraint matrix $A$ 
never changes. GLPK stores only non-zero elements of $A$ in doubly linked 
lists, which is quite space-efficient when $A$ is sparse. All computations
involving $A$ are made ``on the fly'' using this sparse storage.}

The simplex method works by changing one feasible arrangement to another one
while improving the goal $gx$. Suppose we are at a feasible arrangement where
$gx$ cannot decrease.
Let $\phi\in\R^m$ be a vector such that $g' = g - \phi^T (M A)$
contains zeros at indices corresponding to base variables. As $MA$ contains
a unit matrix at these positions, computing this $\phi$ is trivial. Now $g'x
= gx - \phi^T(MA)x = gx - \phi^TMc$, thus $g'x$ is minimal as well. By
simple inspection $g'x$ cannot be decreased if and only if for each 
non-base variable $x_i$ one of the following three possibilities hold:
\begin{itemize}\setlength\itemsep{2pt}
\item[] $g'_i=0$;
\item[] $g'_i > 0$ and $x_i$ is on its upper limit;
\item[] $g'_i < 0$ and $x_i$ is on its lower limit.
\end{itemize}
When this condition is met, the optimum is reached. At this point, instead of
terminating the solver, let us
change the bounds $a_i\le b_i$ to $a'_i\le b'_i$ as follows. For base
variables $x_i$ and when $g'_i=0$ keep the old bounds: $a'_i=a_i$,
$b'_i=b_i$. If $g'_i>0$ 
then set $a'_i=b'_i=b_i$ (shrink it to the upper limit), and if $g'_i<0$ then
set $a'_i=b'_i=a_i$ (shrink it to the lower limit).

\begin{proposition}\label{prop:2}
The solution space of the scalar LP
$$
   \min\nolimits_x\,\big\{ gx \,:\, Ax = c, ~ a\le x\le b \,\big\}
$$
is the collection of all $x\in\R^n$ which satisfy
$$
     \big\{ Ax=c, ~ a'\le x \le b' \big\}.
$$
\end{proposition}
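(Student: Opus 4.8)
The plan is to introduce the abbreviations $S$ for the solution space of the original LP (the set of feasible minimisers) and $T=\{x\in\R^n:Ax=c,\ a'\le x\le b'\}$, and to prove $S=T$ by two inclusions. The essential preliminary reduction is to replace the objective $gx$ by the reduced objective $g'x$, where $g'=g-\phi^T(MA)$. Because $MAx=Mc$ holds for every $x$ with $Ax=c$, we have $g'x=gx-\phi^TMc$ on the whole feasible set, so the two objectives differ there by a fixed constant and therefore have identical minimisers; thus I may work with $g'$ throughout. By the defining property of $\phi$, the vector $g'$ vanishes at every base index, so $g'x=\sum_{i}g'_ix_i$ ranges only over the non-base coordinates and splits into independent one-variable terms. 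This separability, together with the fact that $[a'_i,b'_i]\subseteq[a_i,b_i]$ holds coordinatewise by the construction of $a',b'$, is the mechanism that makes the bound shrinking legitimate.

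I would first prove $T\subseteq S$. Take $x$ with $Ax=c$ and $a'\le x\le b'$. Since $a\le a'$ and $b'\le b$, the point $x$ is feasible for the original LP. For every non-base index with $g'_i\neq0$ the construction collapses $[a'_i,b'_i]$ to the single value ($a_i$ or $b_i$) dictated by the sign of $g'_i$, which is exactly the value this coordinate takes at the optimal basic solution $x^0$ reached when the optimality conditions hold; for indices with $g'_i=0$ the term $g'_ix_i$ contributes nothing. Hence $g'x=\sum_{g'_i\neq0}g'_ix^0_i=g'x^0$, the optimum value, so $x$ minimises $g'x$ and therefore $gx$, giving $x\in S$.

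For the reverse inclusion $S\subseteq T$ I would use the separable structure. Over the box $a_i\le x_i\le b_i$ each term $g'_ix_i$ is optimised exactly at the endpoint selected by the sign rule underlying the construction, and the optimal basic solution $x^0$ realises all of these per-coordinate optima simultaneously. Consequently the feasible optimum of $g'x$ equals the sum of the individual term-optima, and a feasible point attains it only if it agrees with $x^0$ on every coordinate with $g'_i\neq0$ — that is, only if $a'\le x\le b'$. Combined with $Ax=c$, this places any minimiser in $T$.

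The main obstacle is precisely this forward direction: ruling out a minimiser that leaves some non-base coordinate with $g'_i\neq0$ strictly inside $(a_i,b_i)$. The cleanest route is the separable-bound argument just sketched, where the crucial nontrivial fact is that the one-variable optima are \emph{jointly} attainable (they are all met at $x^0$), so that the decoupled optimum coincides with the true optimum and pins down the optimal face exactly as $\{a'\le x\le b'\}$. I would also record the minor but necessary point that this face meets $\{Ax=c\}$, since it contains $x^0$ itself; this guarantees $T$ is non-empty and that the shrunken bounds describe the solution space rather than over-constraining it. The algebraic identities $g'x=gx-\phi^TMc$ and the vanishing of $g'$ on base indices are routine and can be invoked without detailed computation.
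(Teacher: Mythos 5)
Your proof is correct, and it does genuinely more than the paper's own proof, which consists of a single sentence covering only the forward inclusion --- an optimal $x$ must satisfy $a'\le x\le b'$, ``otherwise $gx$ can decrease'' --- and never addresses the reverse inclusion at all, even though the set \emph{equality} is exactly what the patched solver relies on (after shrinking the bounds, \emph{every} feasible point must be optimal for the first goal). You prove both inclusions. The reverse one you get by observing that $g'x$ is constant on $T=\{x: Ax=c,\ a'\le x\le b'\}$: coordinates with $g'_i\neq 0$ are pinned to the very endpoint they occupy at the optimal basic solution $x^0$, base coordinates have zero coefficient in $g'$, so $g'x=g'x^0$ and hence $gx$ is optimal. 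For the forward inclusion your key lemma --- compare with the box relaxation: the minimum of the separable function $\sum_i g'_ix_i$ over the box $a\le x\le b$ alone is attained at the \emph{feasible} point $x^0$, so the feasible minimum equals the box minimum, and any feasible minimiser must therefore attain every per-coordinate minimum, which forces $a'\le x\le b'$ --- is a different mechanism from the paper's implicit one, and a more robust one. Read literally, ``otherwise $gx$ can decrease'' calls for a feasible descent direction at an arbitrary (possibly non-basic, possibly degenerate) optimal point; compensating a move in a non-base coordinate by adjusting base variables can push a base variable that sits at one of its bounds outside its range, so the naive descent argument has a gap. Your global comparison with the box needs no direction at all, and it re-proves, as a byproduct, that the three-case stopping test is indeed sufficient for optimality.

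One caveat on wording: your ``endpoint selected by the sign rule'' must be the endpoint at which $g'_ix_i$ is \emph{minimal}, i.e.\ the lower bound when $g'_i>0$ and the upper bound when $g'_i<0$; this is the pairing under which the proposition is true and under which your separability argument goes through. The paper's text states the opposite pairing (``$g'_i>0$ and $x_i$ is on its upper limit'', with the bounds then shrunk to the upper limit), which for a minimisation problem is a sign slip. Your proof implicitly corrects it, but it would be worth saying so explicitly so that the construction of $a',b'$ is checked against the right rule.
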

\begin{proof}
From the discussion above it is clear that if $gx$ is optimal
then $x$ must satisfy $a'\le x \le b'$, otherwise $gx$ can
decrease.
\end{proof}

This indicates how GLPK can be patched.
When the optimum is reached, change the bounds $(a,b)$ to $(a',b')$.
After this change all feasible
solutions are optimal solutions for the first goal. Change the goal function
to the next one and resume execution. The last arrangement remains feasible,
thus there is no overhead, no additional work is required, just continue
the usual simplex iteration. 
Experience shows that each additional goal adds only a few or no
iterations at all. The performance loss over returning after the first
minimization is really small.

\subsection{Plane separation oracle}\label{subsec:HSO}

The plane separation oracles used in Algorithm \ref{alg:minner}
can be implemented as indicated at the end of Section
\ref{subsec:MOLP-algs}. The oracle's input is a vector $h\in\R^m$ and an
intercept $M\in\R$. The
oracle should return a point $v\in\R^m$ on the boundary of $\Q$ (weak
oracle), or a vertex of $\Q$ (strong oracle) for which the scalar product
$hv$ is the smallest possible. Both the oracle (and the procedure calling
the oracle) should be prepared for cases when $\Q$ is empty, or when $\{ hv
: v\in\Q\}$ is not bounded from below.

\begin{oracle}[weak $\wHSO(\Q)$]\label{def:oracle-wHSO}\rm
On input $h\in\R^p$ and intercept $M\in\R$ find any $\hat x\in\R^m$ 
where the scalar LP problem
\begin{equation}[P$(h)$]\label{eq:owHSO}
    \min\nolimits_x\,\{ h^TPx, ~ Ax=c,~ x\ge 0 ~\}
\end{equation}
takes its minimum. If the LP fails, return failure. Otherwise compute
$v=P\hat x\in\R^p$. Return ``inside'' if $M$ is specified and $hv\ge M$,
otherwise return $v$.
\qed
\end{oracle}

The scalar LP problem \ref{eq:owHSO} searches for a point of $\Q$ which is farthest
away in the direction of $h$. As all oracle calls specify a normal $h\ge 0$,
the LP can fail only when some of the assumptions on the MOLP problem
in Section \ref{subsec:assumptions} does not hold. In such a case the MOLP
solver can report the problem and abort.

The very first oracle call can be used to find a boundary point of
$\Q$ Algorithm \ref{alg:minner} starts with. In this case the oracle is
called without the intercept. This first oracle call will complain
when the the polytope $\mathcal A$ is empty, but might not detect
if $\Q$ is not bounded from below in some objective direction. The input to
subsequent oracle calls is the facet equation of the approximating polytope,
thus have both normal and intercept.

\medskip

To guarantee that the oracle returns a vertex of $\Q$, and not an
arbitrary point on its boundary at maximal distance, the multi-goal scalar LP
solver from Section \ref{subsec:multi-goal} can be invoked.

\begin{oracle}[strong $\HSO(\Q)$]\label{def:oracle-HSO}\rm
On input $h\in\R^p$ and intercept $M\in\R^p$ call the multi-goal LP solver
as follows:
\begin{trivlist}\setlength\itemsep{2pt}
\item[\quad]\textbf{Constraints}:
   $Ax=c$, $x\ge 0$.
\item[\quad]\textbf{Goals}:
   $h^TPx$, $P_1x$, $P_2x$, $\dots$, $P_px$,
\end{trivlist}
where $P_i$ is the $i$-the row of the objective matrix $P$. The solver
returns $\hat x$. Compute $\hat v = P\hat x$. Return ``inside'' if $M$ is
specified and $h\hat v\ge M$, otherwise return $\hat v$.
\qed
\end{oracle}

The point $\hat v$ is lexicographically minimal among the boundary points of
$\Q$
which are farthest away from the specified hyperplane, thus -- assuming the
oracle did not fail -- it is a vertex.

Observe that the plane separating oracles in this section work directly on
the polytope $\mathcal A$ without modifying or adding any further
constraints. Thus the constraints and all but the first goal in the
invocations of the multi-goal LP solver are the same.

\section{Remarks}\label{sec:remarks}

\subsection{Using different ordering cone}\label{subsec:ordering-cone}

In a more general setting the minimization in the MOLP problem
\begin{equation}[MOLP]
   \mbox{find } ~~\min\nolimits_y\,\{y:\,y\in\Q\,\}
\end{equation}
is understood with respect to the partial ordering on $\R^p$ defined by the
(convex, closed, pointed, and polyhedral) {\em ordering cone} $\mathcal C$. In
this case the solution of the MOLP problem is the list of facets and
vertices of the Minkowski sum $\Q^+_{\mathcal C}=\Q+\mathcal C$.
The MOLP problem is {\em $\mathcal C$-bounded} just in case the ideal points of
$\Q^+_{\mathcal C}$ are the extremal rays of $\mathcal C$. In this case the
inner algorithm works with the same plane separating oracles
$\HSO(\Q)$ and $\wHSO(\Q)$ as defined in Section \ref{subsec:HSO} 
whenever the vertices of the initial approximation $\mathcal S$ are just these
extremal ideal points and some point $v\in\Q$. Indeed, in this case we have 
$\Q^+_{\mathcal C} = \conv(\Q\cup \S)$, and the algorithm terminates with a
double description of this polytope.

When using the outer approximation algorithm, the point separating oracle is
invoked with the polytope $\Q^+_{\mathcal C}$. The vector $e^*$ should be
chosen to be an internal ray in $\mathcal C$, and the scalar LP searching
for the intersection of $v-\lambda e^*$ and the boundary of $\Q^+_{\mathcal
C}$ is
$$
   \hat\lambda = \max\nolimits_{\lambda,x,z}\,
     \{ \lambda: v-\lambda e^* = Px + z,~ Ax=c, ~ x\ge 0,~ z\in\mathcal C\},
$$
see Section \ref{subsec:PSO}. The analog of Proposition \ref{prop:1}
can be used to realize the oracles $\PSO(\Q^+_{\mathcal C})$ and
$\wPSO(\Q^+_{\mathcal C})$. The base of the initial bounding $\S$ is
formed by the ideal vertices at the end of the extremal rays of $\mathcal
C$, as above. The top vertex can be
generated by computing a $H$-minimal point of $\Q$ for each facet $H$
of $\mathcal C$.

\subsection{Relaxing the condition on boundedness}

When the boundedness assumption in Section \ref{subsec:assumptions} does not
hold, then the ideal points of $\Q^+$ (or the polytope $\Q^+_{\mathcal C}$
above) are not known in advance. As the initial approximation of the Outer
algorithm \ref{alg:benson} must contain $\Q^+$, these ideal points should be
computed first.
For Algorithm \ref{alg:minner} there are two possibilities. One
can start from the same initial polytope $\S$, but then the oracle can
return ideal points.
Or, as above,
the initial approximation could contain all ideal points, and then all
points returned by the oracle (which could just be $\HSO(\Q)$ or
$\wHSO(\Q)$) are non-ideal points.

The ideal points of $\Q^+$ (or $\Q^+_{\mathcal C}$) are the convex hull of
the ideal points of $\Q$ and that of the positive orthant $\O^p$ (or the
ordering cone $\mathcal C$). To find all the ideal vertices, Algorithm
\ref{alg:minner} can be called recursively for this $(p-1)$-dimensional
subproblem.

\subsection{The order of oracle inputs}\label{subsec:order}

The skeletal algorithms \ref{alg:outer} and \ref{alg:inner} do not specify
which non-final vertex (facet) should be passed to the oracle; as far as the
algorithm is concerned, any choice is good. But the actual choice can have a
significant effect on the overall performance: it can affect the size
(number of vertices and facets) of the subsequent approximating polytopes, and,
which is equally important, numerical stability. There are no theoretical
results which would recommend any particular choice. Bremner in
\cite{bremner} gave an example where {\em any choice} leads to a blow-up in
the approximating polytope.

The implementation of Algorithm \ref{alg:inner} reported in Section
\ref{subsec:numresult} offers
three possibilities for choosing which vertex is to be added to the
approximation polytope.
\begin{enumerate}
\item Ask the oracle the facets in the order they were created (first facets
from the initial approximation $\S=\S_0$, then facets from $\S_1$, etc.),
and then use the returned vertex to create the next approximation.
\item Pick a non-final facet from the current approximation randomly 
with (roughly) equal probability, and ask this facet from the oracle.
\item Keep a fixed number of candidate vertices in a ``pool''. (The pool
size can be set between 10 and 100.) Fill the pool by asking the oracle
non-final facets randomly. 
Give a score for each vertex in the pool and choose the
vertex with the best score.
\end{enumerate}
The following heuristics gave consistently the best result: choose the vertex
which discards the largest number of facets (that is, the vertex for which
the set $H^-$ is the biggest).
It
would be interesting to see some theoretical support for this heuristics.

\subsection{Parallelizing}\label{subsec:parallel}

While the simplex algorithm solving scalar LP problems is inherently serial,
the most time-consuming part of vertex enumeration -- the ridge test -- can
be done in parallel. There are LP intensive problems -- especially when the
number of variables and constraints are high and there are only a few
objectives -- where almost the total
execution time is spent by the LP solver; and there are combinatorial
intensive problems -- typically when the number of objectives is
20 or more -- where the LP solver takes negligible time. In the latter cases
the average number of ridge tests per iteration can be as high as
$10^7$---$10^9$ and it takes hours to complete the iteration.
Doing it in parallel can reduce the execution time as
explained in Section \ref{sec:vertex-enum}.

In a multithread environment each thread is assigned a set of facet pairs on
which it executes the ridge test as defined in Proposition \ref{ridge-test},
and computes the coordinates of the new facet if the pair passes the test.
Every thread uses the current facet and vertex bitmaps with total size up to
$10^8$--$10^9$ byte, while every thread requires a relative small private
memory around $10^5$ byte. Run on a single processor the algorithm scales
well with the number of assigned threads. On supercomputers with
hundreds of threads (and processors) memory latency can be an issue
\cite{parallel}.

\begin{table}\label{table:sample}
\begin{center}\begin{tabular}{|rrr|rrr|}
\hline
\multicolumn{3}{|c}{\rule{0pt}{9pt} problem}&\multicolumn{3}{|c|}{\strut solution} \\
vars & eqs & objs & vertices & facets & time \\
\hline
\rule{0pt}{9pt}
844 & 12 & 10 & 77 & 817        & 0:01 \\
888 & 12 & 10 & 1291 & 11232    & 2:44 \\
1983& 12 & 10 & 2788 & 26859 & 6:24 \\
9472 & 707 & 10 & 97 & 271 & 19:50 \\
138 & 31 & 21 & 18 & 9076 & 0:06 \\
25 & 8 & 22 & 153 & 3000 & 34:50 \\
139 & 30 & 22 & 178 & 36784 & 4:41 \\
220 & 42 & 22 & 452 & 15949 & 5:59 \\
213 & 43 & 22 & 586 & 151474 & 1:07:16 \\
174 & 48 & 27 & 290 & 116091 & 2:23:17 \\
\hline
\end{tabular}\end{center}
\caption{Some computational results}
\end{table}


\subsection{Numerical results}\label{subsec:numresult}

An implementation of Algorithm \ref{alg:minner} is available at the github
site {\tt https://github.com/lcsirmaz/inner}, together with more than 80 MOLP
problems and their solutions. The problems come
from combinatorial optimization, have highly degenerate constraint
matrices and 
large number of objectives. Table \ref{table:sample} contains a sample of
this set.
Columns {\em variables} and {\em equations} refer to the columns and rows of
the constraint matrix $A$ in (\ref{eq:A}), and {\em objectives} is the number
of objectives. The next three columns contain the number of vertices and
facets of $\Q^+$ as well as the running time on a single desktop machine
with 8 Gbyte memory and Intel i5-4590 processor running on 3.3 GHz. There
is an inherent randomness in the running time as
the constraint matrix is permuted randomly, and during the iterations
the next processed facet is chosen randomly as explained in Section
\ref{subsec:order}. The difference in running time can be very high and it
depends on the size of the intermediate approximations.

\section*{Acknowledgment}
The author would like to thank the generous support of 
the Institute of Information Theory and Automation of the CAS, Prague.
The research reported in this paper has been supported by GACR project
number 16-12010S, and partially by the Lend\"ulet program of the HAS.

The careful and thorough report of the reviewers helped to improve the
presentation, clarify vague ideas, and correcting my sloppy terminology. I am
very much indebted for their valuable work.

\end{document}